\newtheorem{theorem}{Theorem}[section]
\newtheorem{lemma}[theorem]{Lemma}
\newtheorem{corollary}[theorem]{Corollary}
\renewcommand{\phi}{\varphi}
\renewcommand{\theta}{\vartheta}
\newcommand{\Kk}{\mathsf{K}}
\newcommand{\Ll}{\mathsf{L}}
\newcommand{\Jj}{\mathsf{J}}
\DeclarePairedDelimiterX\sip[2]{(}{)}{#1\,\delimsize\vert\,#2}
\DeclarePairedDelimiterX\siptilde[2]{(}{)_{\!_{\widetilde{A}}}}{#1\,\delimsize\vert\,#2}
\DeclarePairedDelimiterX\sipn[2]{(}{)_{\nu}}{#1\,\delimsize\vert\,#2}
\DeclarePairedDelimiterX\sipm[2]{(}{)_{\mu}}{#1\,\delimsize\vert\,#2}
\DeclarePairedDelimiterX\set[2]{\{}{\}}{#1\,\delimsize\vert\,#2}
\DeclarePairedDelimiterX\dual[2]{\langle}{\rangle}{#1,#2}
\DeclarePairedDelimiterX\sipa[2]{(}{)_{\!_A}}{#1\,\delimsize\vert\,#2}
\DeclarePairedDelimiterX\sipb[2]{(}{)_{\!_B}}{#1\,\delimsize\vert\,#2}
\newcommand{\Ee}{\mathfrak{E}}
\newcommand{\Ees}{\mathfrak{E}^{\ast}}
\newcommand{\dtw} {\mathbf{D}_{\tf}\wf}
\newcommand{\dwt}{\mathbf{D}_{\wf}\tf}
\newcommand{\uf}{\mathfrak{u}}
\newcommand{\vf}{\mathfrak{v}}
\newcommand{\tf} {\mathfrak{t}}
\newcommand{\wf} {\mathfrak{w}}
\newcommand{\ssf} {\mathfrak{s}}
\newcommand{\Xx}{\mathfrak{X}}
\newcommand{\fpx}{\mathcal{F}_+(\Xx)}
\newcommand{\kpx}{\mathcal{K}_+(\Xx)}
\renewcommand*{\phi}{\varphi}
\renewcommand*{\theta}{\vartheta}
\newcommand{\tkerw}{\tf_{{}_{\ker\wf}}}
\DeclareMathOperator{\tform}{\mathfrak{t}}
\begin{document}

\title{Positive definite operator functions and sesquilinear forms}

\author{Tam\'as Titkos}
\address{Tam\'as Titkos, Institute of Mathematics, E\"otv\"os Lor\'and University, P\'azm\'any P\'eter s\'et\'any 1/c, H-1117, Budapest, Hungary; }

\email{\textrm{titkos@cs.elte.hu}}

\keywords{Nonnegative forms, Lebesgue-type decomposition, Short-type decomposition, Order structure, Infimum problem, Extreme points, Positive definite operator functions, Dilations}
\subjclass[2000]{Primary 47A07, 47A56}
\dedicatory{Dedicated to Zolt\'an Sebesty\'en on the occasion of his $70$th birthday}

\maketitle
\begin{abstract}
Due to the fundamental works of T. Ando, W. Szyma\'nski, F. H. Szafraniec, and many others it is well known that sesquilinear forms play an important role in dilation theory. The crucial fact is that every positive definite operator function induces a sesquilinear form in a natural way. The purpose of this survey-like paper is to apply some recent results of Z. Sebesty\'en, Zs. Tarcsay, and the author for such functions. While most of the results are not new, the paper's main contribution is the unified discussion from the viewpoint of sesquilinear forms.

\end{abstract}

\section{Sesquilinear forms}\label{sect1}
In this preliminary section we review first some of the standard notions and notations and give a brief survey of some recent results needed throughout the paper. We focus on the decomposition and Radon--Nikodym theory of nonnnegative sesquilinear forms that we will apply on positive definite operator functions in Section \ref{sect2}. Our main references are \cite[Section 2]{lebdec} and \cite{stt1}.

\subsection{Notions, notations}

Let $\Xx$ be a complex linear space and let $\tf$ be a nonnegative sesquilinear form (or shortly just \emph{form}) on it. That is, $\tf$ is a mapping from
$\Xx\times\Xx$ to $\mathbb{C}$, which is linear in the first argument, antilinear in the second argument, and the corresponding
quadratic form
\begin{align*}
\forall x\in\Xx:\quad\tf[x]:=\tf(x,x)
\end{align*}
is nonnegative. A crucial fact is that a form is uniquely determined by its quadratic form due to the polarization formula
\begin{align*}
\forall x,y\in\Xx:\quad\tf(x,y)=\frac{1}{4}\sum\limits_{k=0}^3i^k\tf[x+i^ky].
\end{align*}

The set of forms will be denoted by $\fpx$.
For $\tf,\wf\in\fpx$ we write $\tf\leq\wf$ if $\tf[x]\leq\wf[x]$ for all $x\in\Xx$. \emph{Domination} means that there exists a constant $c$ such that $\tf\leq c\cdot\wf$. Using the ordering we can define singularity and almost domination. The forms $\tf$ and $\wf$ are \emph{singular} ($\tf\perp\wf$) if for every form $\ssf$ the inequalities $\ssf\leq\tf$ and $\ssf\leq\wf$ imply that $\ssf=\mathfrak{0}$ (i.e., $\ssf$ is the identically zero form). We say that $\tf$ is \emph{almost dominated by $\wf$} (in symbols: $\tf\ll_{\mathrm{ad}}\wf$) if there exists a monotonically nondecreasing sequence of forms $\tf_n$, each dominated by
$\wf$, such that $\tf=\sup\limits_{n\in\mathbb{N}}\tf_n$ (pointwise supremum). 

Now, we define two  important notions, which are motivated by classical measure theory. Let $\tf$ and $\wf$ be forms, $\tf$ is called \emph{absolutely continuous with respect to $\wf$} (or $\tf$ is $\wf$-absolutely continuous, in symbols: $\tf\ll_\mathrm{ac}\wf$), if $\wf[x]=0$ implies $\tf[x]=0$ for all $x\in\Xx$. We say that $\tf$ is \emph{strongly $\wf$-absolutely continuous} ($\tf\ll_{\mathrm{s}}\wf$, in symbols), if
\begin{align*}
\forall(x_n)_{n\in\mathbb{N}}\in\Xx^{\mathbb{N}}:\quad\big((\tf[x_n-x_m]\to0)~\wedge~(\wf[x_n]\to0)\big)~\Longrightarrow~\tf[x_n]\to0.
\end{align*}
Remark that this notion is called \emph{closability} in \cite{lebdec}; cf. also \cite{simon} and \cite{kato}. The following theorem says that strong absolute continuity is closely related to the ordering. For the proof see \cite[Theorem 3.8]{lebdec}.

\begin{theorem}
Let $\tf$ and $\wf$ be forms on $\Xx$. Then $\tf$ is almost dominated by $\wf$ if and only if $\tf$ is strongly $\wf$-absolutely continuous.
\end{theorem}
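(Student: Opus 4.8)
The plan is to treat the two implications separately: the forward one ($\tf\ll_{\mathrm{ad}}\wf\Rightarrow\tf\ll_{\mathrm{s}}\wf$) by a direct seminorm estimate, and the backward one ($\tf\ll_{\mathrm{s}}\wf\Rightarrow\tf\ll_{\mathrm{ad}}\wf$) by representing $\tf$ through a closable operator between auxiliary Hilbert spaces. I expect the backward direction to carry essentially all of the difficulty.

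For $\tf\ll_{\mathrm{ad}}\wf\Rightarrow\tf\ll_{\mathrm{s}}\wf$, suppose $\tf=\sup_n\tf_n$ with $(\tf_n)$ nondecreasing and $\tf_n\le c_n\wf$ for constants $c_n$; in particular $\tf_n\le\tf$. Let $(x_k)$ satisfy $\tf[x_k-x_l]\to0$ and $\wf[x_k]\to0$. Fixing $\varepsilon>0$, choose $K$ with $\tf[x_k-x_l]<\varepsilon^2$ for $k,l\ge K$. For a fixed $l\ge K$ and each $n$, the triangle inequality for the seminorm $\sqrt{\tf_n[\,\cdot\,]}$ gives $\sqrt{\tf_n[x_l]}\le\sqrt{\tf[x_l-x_k]}+\sqrt{c_n}\,\sqrt{\wf[x_k]}$ for every $k\ge K$; letting $k\to\infty$ kills the second term and bounds the first by $\varepsilon$, so $\tf_n[x_l]\le\varepsilon^2$. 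Taking the supremum over $n$ yields $\tf[x_l]\le\varepsilon^2$ for all $l\ge K$, whence $\tf[x_k]\to0$. The only subtlety is that the estimate on $\sqrt{\tf[x_l-x_k]}$ is uniform in $n$ because $\tf_n\le\tf$, which is precisely what lets the supremum over $n$ pass through at the end.

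For $\tf\ll_{\mathrm{s}}\wf\Rightarrow\tf\ll_{\mathrm{ad}}\wf$, I would first note that strong absolute continuity implies ordinary absolute continuity (apply the definition to a constant sequence), so $\wf[x]=0$ forces $\tf[x]=0$. Completing $\Xx/\ker\wf$ and $\Xx/\ker\tf$ to Hilbert spaces $\mathscr{H}_{\wf}$ and $\Ht$ with canonical maps $\pi_\wf,\pi_\tf$ (so that $\|\pi_\wf(x)\|^2=\wf[x]$ and $\|\pi_\tf(x)\|^2=\tf[x]$), ordinary absolute continuity makes the assignment $T\colon\pi_\wf(x)\mapsto\pi_\tf(x)$ a well-defined, densely defined linear operator from $\mathscr{H}_{\wf}$ to $\Ht$. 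The crux is to recognize that $\tf\ll_{\mathrm{s}}\wf$ is exactly the statement that $T$ is closable: a sequence with $\wf[x_k]\to0$ and $\tf[x_k-x_l]\to0$ corresponds to $\pi_\wf(x_k)\to0$ with $(\pi_\tf(x_k))$ Cauchy, and closability of $T$ is precisely the requirement that any such sequence satisfies $\pi_\tf(x_k)\to0$, i.e. $\tf[x_k]\to0$.

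Granting closability, let $\bar T$ be the closure and set $A:=\bar T^{*}\bar T$, a nonnegative self-adjoint operator on $\mathscr{H}_{\wf}$. Since $\dom\bar T=\dom A^{1/2}$ and $\pi_\wf(x)\in\dom\bar T$, one gets $\pi_\wf(x)\in\dom A^{1/2}$ with $\|A^{1/2}\pi_\wf(x)\|^2=\|\bar T\pi_\wf(x)\|^2=\tf[x]$. Using the spectral measure $(E_\lambda)$ of $A$, define the bounded truncations $A_n:=\int_{[0,n]}\lambda\,dE_\lambda$, so that $0\le A_n\le A_{n+1}\le nI$. Setting $\tf_n(x,y):=(A_n\pi_\wf(x)\,|\,\pi_\wf(y))_{\wf}$ produces forms on $\Xx$ that are nondecreasing and satisfy $\tf_n\le n\wf$, hence each is dominated by $\wf$; meanwhile, for every $x$ the spectral theorem gives $\sup_n\tf_n[x]=\int_{[0,\infty)}\lambda\,d\|E_\lambda\pi_\wf(x)\|^2=\|A^{1/2}\pi_\wf(x)\|^2=\tf[x]$. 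Thus $\tf=\sup_n\tf_n$ with each $\tf_n$ dominated by $\wf$, i.e. $\tf\ll_{\mathrm{ad}}\wf$. The main obstacle is this operator-theoretic identification: verifying that $T$ is well-defined and closable, that $\pi_\wf(\Xx)\subseteq\dom A^{1/2}$ with the stated norm identity, and that the spectral truncations recover $\tf$ as a pointwise supremum. (The fact that $\wf$-dominated forms correspond to bounded positive operators on $\mathscr{H}_{\wf}$ is what guarantees the $\tf_n$ are legitimate dominated forms.)
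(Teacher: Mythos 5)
Your proof is correct in both directions. Note first that there is no internal proof to compare against: the paper states this theorem without proof and refers to \cite[Theorem 3.8]{lebdec} for it, so your argument should be judged against that cited literature rather than against anything in the text. Both halves of your argument check out: in the forward direction the uniform bound $\tf_n\le\tf$ (valid because the sequence is nondecreasing with pointwise supremum $\tf$) is exactly what allows the estimate $\sqrt{\tf_n[x_l]}\le\sqrt{\tf[x_l-x_k]}+\sqrt{c_n}\,\sqrt{\wf[x_k]}$ to survive the supremum over $n$; in the backward direction, the identification of strong absolute continuity with closability of the embedding $T\colon x+\ker\wf\mapsto x+\ker\tf$ is legitimate (Cauchy equals convergent in the complete space $\mathscr{H}_{\tf}$, and closability only needs to be tested on sequences from $\dom T$), the equality $\dom\bar T=\dom(\bar T^{*}\bar T)^{1/2}$ with $\|(\bar T^{*}\bar T)^{1/2}u\|=\|\bar Tu\|$ is the standard consequence of the polar decomposition, and the spectral truncations give a nondecreasing sequence of $\wf$-dominated forms converging pointwise to $\tf$ by monotone convergence. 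This is essentially the classical route used in the sources the paper cites (it is the mechanism behind Simon's canonical decomposition \cite{simon} and behind \cite{lebdec}); the paper itself hints at it when it remarks, after defining $\mathcal{H}_{\tf}$, that strong absolute continuity ``means that this embedding is a closable operator.'' What your write-up buys is self-containedness: the survey's quoted theorem gets an explicit proof, with the dominated approximating sequence constructed concretely as $\tf_n(x,y)=(A_n\pi_{\wf}(x)\,|\,\pi_{\wf}(y))_{\wf}$. One trivial slip to fix: the chain $0\le A_n\le A_{n+1}\le nI$ should read $0\le A_n\le A_{n+1}$ together with $A_n\le nI$, since $A_{n+1}$ may have norm up to $n+1$; only $A_n\le nI$ is used, so nothing else changes.
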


It is important to mention that if $\tf\in\fpx$ then the square root of its quadratic form defines a seminorm on $\Xx$. Hence the set
\begin{align*}
\ker\tf:=\big\{x\in\Xx~\big|~\tf[x]=0\big\}
\end{align*}
is a linear subspace of $\Xx$. The Hilbert space $\mathcal{H}_{\tf}$ denotes the completion of the inner product space $\Xx/_{\ker\tf}$ equipped with the natural inner product 
\begin{align*}
\forall x,y\in\Xx:\quad(x+\ker\tf~|~y+\ker\tf)_\tf:=\tf(x,y).
\end{align*}
Observe that $\tf$ is $\wf$-absolutely continuous if and only if the canonical embedding (which assigns the coset $x+\ker\tf$ to $x+\ker\wf$) from $\mathcal{H}_{\wf}$ to $\mathcal{H}_{\tf}$ is well-defined. Strong absolute continuity means that this embedding is a closable operator.

We close this subsection with a Radon--Nikodym-type result. This was proved independently by Zs. Tarcsay, from a different point of view. For more background we refer the reader to \cite{trn}. 
\begin{lemma}
Let $\tf$ and $\wf$ be forms on $\Xx$ and assume that $\tf\leq c\cdot\wf$ for some $c>0$. Then for every $y\in\Xx$ there exists a unique vector $\xi_y$ in $\mathcal{H}_{\wf}$ such that
\begin{align*}
\forall x\in\Xx:\qquad\tf(x,y)=(x+\ker\wf~|~\xi_y)_{\wf}.
\end{align*}
\end{lemma}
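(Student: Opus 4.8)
The plan is to fix $y\in\Xx$ and recognize the map $x\mapsto\tf(x,y)$ as a bounded linear functional on the pre-Hilbert space $\Xx/_{\ker\wf}$, then invoke the Riesz representation theorem on the completion $\mathcal{H}_\wf$. The hypothesis $\tf\leq c\cdot\wf$ will be used twice: it guarantees that the functional is well defined on cosets modulo $\ker\wf$, and it supplies the bound needed to extend it continuously to $\mathcal{H}_\wf$.

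First I would record the kernel inclusion $\ker\wf\subseteq\ker\tf$. Indeed, if $\wf[x]=0$ then $0\leq\tf[x]\leq c\cdot\wf[x]=0$, so $x\in\ker\tf$. Next, fix $y\in\Xx$ and set
\begin{align*}
\ell_y(x+\ker\wf):=\tf(x,y).
\end{align*}
To see that this is well defined, suppose $x+\ker\wf=x'+\ker\wf$, i.e. $x-x'\in\ker\wf\subseteq\ker\tf$. Since $\tf$ is a nonnegative form, the Cauchy--Schwarz inequality gives $\Abs{\tf(x-x',y)}\leq\tf[x-x']^{1/2}\tf[y]^{1/2}=0$, so $\tf(x,y)=\tf(x',y)$. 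The functional $\ell_y$ is linear because $\tf$ is linear in its first argument, which is consistent with the convention that $(\cdot~|~\cdot)_\wf$ is linear in its first slot.

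Boundedness again follows from Cauchy--Schwarz together with the domination: for every $x\in\Xx$,
\begin{align*}
\Abs{\ell_y(x+\ker\wf)}=\Abs{\tf(x,y)}\leq\tf[x]^{1/2}\tf[y]^{1/2}\leq\sqrt{c}\,\tf[y]^{1/2}\,\wf[x]^{1/2}=\sqrt{c}\,\tf[y]^{1/2}\cdot\|x+\ker\wf\|_\wf.
\end{align*}
Thus $\ell_y$ is a bounded linear functional on the dense subspace $\Xx/_{\ker\wf}$ of $\mathcal{H}_\wf$, and it extends uniquely to a bounded linear functional on $\mathcal{H}_\wf$. By the Riesz representation theorem there is a unique $\xi_y\in\mathcal{H}_\wf$ with $\ell_y(\eta)=(\eta~|~\xi_y)_\wf$ for all $\eta\in\mathcal{H}_\wf$; specializing to $\eta=x+\ker\wf$ yields the desired identity. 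Uniqueness of $\xi_y$ follows because the cosets $x+\ker\wf$ span a dense subspace of $\mathcal{H}_\wf$, so any two representing vectors have the same inner products against a dense set and therefore coincide.

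I do not anticipate a genuine obstacle here. The only point requiring the full strength of the hypothesis, rather than mere absolute continuity, is the boundedness estimate, and the only mildly delicate point is well-definedness on cosets modulo $\ker\wf$, which is exactly what the kernel inclusion $\ker\wf\subseteq\ker\tf$ resolves.
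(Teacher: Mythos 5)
Your proof is correct and follows essentially the same route as the paper: define the functional $x+\ker\wf\mapsto\tf(x,y)$, bound it via the Cauchy--Schwarz inequality together with the domination $\tf\leq c\cdot\wf$, and apply the Riesz representation theorem on $\mathcal{H}_\wf$. Your treatment is in fact slightly more careful than the paper's, since you spell out the well-definedness on cosets (via the kernel inclusion $\ker\wf\subseteq\ker\tf$) and the uniqueness of $\xi_y$ by density, both of which the paper leaves implicit.
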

\begin{proof}
Let $y$ be an arbitrary but fixed element of $\Xx$ and define the linear functional $\Phi_y$ as follows
\begin{align*}
\Phi_y:\Xx/_{\ker\wf}\to\mathbb{C};\qquad x+\ker\wf\mapsto(x+\ker\tf~|~y+\ker\tf)_{\tf}.
\end{align*}
According to the Cauchy-Schwarz inequality and the assumption it is clear that $\Phi_y$ is a bounded linear functional. Indeed,
\begin{align*}
|\Phi_y(x+\ker\wf)|^2\leq\|x+\ker\tf\|_{\tf}^2\cdot\|y+\ker\tf\|_{\tf}^2\leq c^2\cdot\|x+\ker\wf\|_{\wf}^2\cdot\|y+\ker\wf\|_{\wf}^2.
\end{align*}
Consequently, due to the Riesz representation theorem there exists a unique vector $\xi_y$ in $\mathcal{H}_{\wf}$ such that
\begin{align*}
\forall x\in\Xx:\qquad\tf(x,y)=(x+\ker\tf~|~y+\ker\tf)_{\tf}=\Phi_y(x+\ker\tf)=(x+\ker\wf~|~\xi_y)_{\wf}.
\end{align*}
\end{proof}
\begin{theorem}\label{RNF}
Let $\tf,\wf\in\fpx$ be forms on $\Xx$ and let $\tf$ be almost dominated by $\wf$. Then for every $y\in\Xx$ there exists a sequence $(y_n)_{n\in\mathbb{N}}\in\Xx^{\mathbb{N}}$ such that
\begin{align*}
\forall x\in\Xx:\qquad\tf(x,y)=\lim\limits_{n\to +\infty}\wf(x,y_n).
\end{align*}
\end{theorem}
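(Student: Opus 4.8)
The plan is to combine the Radon--Nikodym Lemma stated just above with the monotone approximation built into the definition of almost domination, and then to trade the abstract Riesz representatives in $\mathcal{H}_{\wf}$ for genuine cosets $y_n+\ker\wf$ using the fact that $\Xx/_{\ker\wf}$ is dense in $\mathcal{H}_{\wf}$. Since $\tf$ is almost dominated by $\wf$, I would first fix a monotonically nondecreasing sequence $(\tf_n)_{n\in\mathbb{N}}$ in $\fpx$ with $\tf_n\leq c_n\wf$ for suitable constants $c_n>0$ and with $\tf=\sup_{n\in\mathbb{N}}\tf_n$ pointwise. Fixing $y\in\Xx$ and applying the preceding Lemma to each $\tf_n$ (each of which is dominated by $\wf$) yields, for every $n$, a unique vector $\xi_y^{(n)}\in\mathcal{H}_{\wf}$ such that $\tf_n(x,y)=(x+\ker\wf~|~\xi_y^{(n)})_{\wf}$ for all $x\in\Xx$.

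Next I would check that $\tf_n(x,y)\to\tf(x,y)$ for every fixed $x$. The quadratic forms satisfy $\tf_n[z]\nearrow\tf[z]$ for each $z\in\Xx$; applying this to $z=x+i^ky$ for $k=0,1,2,3$ and inserting the results into the polarization formula gives $\tf_n(x,y)\to\tf(x,y)$. Hence the numbers $(x+\ker\wf~|~\xi_y^{(n)})_{\wf}$ converge to $\tf(x,y)$ for every $x$.

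Finally, since $\mathcal{H}_{\wf}$ is the completion of $\Xx/_{\ker\wf}$, for each $n$ I would choose $y_n\in\Xx$ with $\|(y_n+\ker\wf)-\xi_y^{(n)}\|_{\wf}\leq\tfrac{1}{n}$. Then for every $x\in\Xx$,
\begin{align*}
|\wf(x,y_n)-\tf(x,y)|\leq\|x+\ker\wf\|_{\wf}\cdot\tfrac{1}{n}+|\tf_n(x,y)-\tf(x,y)|\longrightarrow0,
\end{align*}
where the first term comes from Cauchy--Schwarz applied to $(x+\ker\wf~|~(y_n+\ker\wf)-\xi_y^{(n)})_{\wf}$ and the second from the previous paragraph. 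This is exactly the asserted convergence.

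The point I expect to be delicate is that the representing vectors $\xi_y^{(n)}$ need \emph{not} converge in $\mathcal{H}_{\wf}$, since the domination constants $c_n$ may blow up; indeed the limiting functional $x+\ker\wf\mapsto\tf(x,y)$ need not even be bounded on $\mathcal{H}_{\wf}$. The key idea is therefore that one never needs convergence of the $\xi_y^{(n)}$: the pointwise convergence $\tf_n(x,y)\to\tf(x,y)$ together with an $n$-dependent density approximation of each $\xi_y^{(n)}$ by a coset already produces the required sequence $(y_n)_{n\in\mathbb{N}}$ via the diagonal estimate above.
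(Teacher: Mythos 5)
Your proof is correct and follows essentially the same route as the paper's: apply the Radon--Nikodym Lemma to each $\wf$-dominated form $\tf_n$, pass to the limit in $n$, and then trade the Riesz representatives for cosets $y_n+\ker\wf$ using the density of $\Xx/_{\ker\wf}$ in $\mathcal{H}_{\wf}$. The only immaterial difference is how the pointwise convergence $\tf_n(x,y)\to\tf(x,y)$ is verified: you use polarization together with monotone convergence of the quadratic forms, while the paper applies the Cauchy--Schwarz inequality to the nonnegative form $\tf-\tf_n$.
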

\begin{proof}
Fix an arbitrary $y\in\Xx$. Since $\tf$ is almost dominated by $\wf$, there exists a suitable sequence $(\tf_n)_{n\in\mathbb{N}}$ of $\wf$-dominated forms and a sequence $(\xi_{y,n})_{n\in\mathbb{N}}$ of representant vectors such that
\begin{align*}
\lim\limits_{n\to +\infty}\tf_n=\tf\qquad\mbox{and}\qquad(\forall x\in\Xx)~(\forall n\in\mathbb{N}):\quad\tf_n(x,y)=(x+\ker\wf~|~\xi_{y,n})_{\wf}.
\end{align*}
As $\tf_n\leq\tf$, we can apply the Cauchy--Schwarz inequality on the form $\tf-\tf_n$ that gives 
\begin{align*}
\big|\big(\tf-\tf_n\big)(x,y)\big|^2\leq\big(\tf-\tf_n\big)[x]\big(\tf-\tf_n\big)[y]\to 0,\qquad n\to +\infty, 
\end{align*}
whence we infer that 
\begin{equation*}
\tf(x,y)=\lim\limits_{n\to +\infty} t_n(x,y)=\lim\limits_{n\to +\infty}(x+\ker \wf~|~\xi_{y,n})_\wf.
\end{equation*}
Since $\Xx/_{\ker\wf}$ is dense in $\mathcal{H}_{\wf}$ we can choose a sequence $(y_n)_{n\in\mathbb{N}}\in\Xx^{\mathbb{N}}$ such that
\begin{align*}
\big\|\xi_{y,n}-(y_n+\ker\wf)\big\|_{\wf}\to 0.
\end{align*}
According to the Cauchy--Schwarz inequality, this implies that
\begin{align*}
\big|(x+\ker\wf~|~\xi_{y,n})_{\wf}-(x+\ker\wf~|~y_n+\ker\wf)_{\wf}\big|\to 0
\end{align*}
and thus
\begin{align*}
\forall x\in\Xx:\qquad\tf(x,y)=\lim\limits_{n\to +\infty}\wf(x,y_n).
\end{align*}
\end{proof}

\subsection{Decomposition theorems}
In this subsection we recall two basic results of decomposition theory of forms. The first one is the so-called short-type decomposition, which is a decomposition of $\tf$ into absolutely continuous and singular parts. The key notion is the \emph{short} of a form to a linear subspace of $\Xx$ (for the details see \cite{stt2}), which is a generalization of the well known concept of operator short \cite{anderson, krein, pekarev ext}.

Let $\tf$ and $\wf$ be forms on $\Xx$, then the short of $\tf$ to the subspace $\ker\wf$ is defined by
\begin{align*}
\forall x\in\Xx:\quad\tf_{{}_{\ker\wf}}[x]:=\inf\limits_{y\in\ker\wf}\tf[x-y].
\end{align*}
The short-type decomposition theorem is stated as follows (\cite[Theorem 1.2]{stt2}).
\begin{theorem} \label{STD}
Let $\tf,\wf\in\fpx$ be forms on $\Xx$. Then there exists a short-type decomposition of $\tf$ with respect to $\wf$. Namely,
\begin{align*}
\tf=\tkerw+(\tf-\tkerw),
\end{align*}
where the first summand is $\wf$-absolutely continuous and the second one is $\wf$-singular.
Furthermore, $\tkerw$ is the largest element of the set
\begin{align*}
\big\{\ssf\in\fpx~\big|~(\ssf\leq\tf)~\wedge~(\ssf\ll_{\mathrm{ac}}\wf) \big\}.
\end{align*}
The decomposition is unique precisely when $\tkerw$ is dominated by $\wf$.
\end{theorem}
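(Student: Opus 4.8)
The plan is to realize the short concretely inside the Hilbert space $\Ht=\mathscr{H}_{\tf}$. Write $J\colon\Xx\to\Ht$ for the canonical map $x\mapsto x+\ker\tf$, so that $\tf(x,y)=(Jx\,|\,Jy)_{\tf}$, let $N:=\overline{J(\ker\wf)}$, and let $P$ be the orthogonal projection of $\Ht$ onto $N$. Since the infimum defining $\tkerw$ is exactly the squared distance of $Jx$ to the subspace $J(\ker\wf)$, and the distance to a subspace equals the distance to its closure, I would first establish
\begin{align*}
\tkerw[x]=\inf_{y\in\ker\wf}\|Jx-Jy\|_{\tf}^2=\dist\big(Jx,N\big)^2=\big\|(I-P)Jx\big\|_{\tf}^2.
\end{align*}
This identity shows at once that $\tkerw$ is a genuine nonnegative form, namely the one represented by $I-P$, that $\tf-\tkerw$ is the form represented by $P$, and hence that $\tkerw\le\tf$.

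For the absolute continuity of $\tkerw$: if $\wf[x]=0$ then $x\in\ker\wf$, so $Jx\in N$ and $(I-P)Jx=0$, whence $\tkerw[x]=0$; thus $\tkerw\ll_{\mathrm{ac}}\wf$. The recurring tool for the remaining parts is the operator representation coming from the Radon--Nikodym lemma above: any form $\ssf$ with $0\le\ssf\le\tf$ is represented by a unique self-adjoint $A$ on $\Ht$ with $0\le A\le I$ and $\ssf(x,y)=(AJx\,|\,Jy)_{\tf}$. To prove that $\tf-\tkerw$ is $\wf$-singular, take any $\ssf$ with $\ssf\le\tf-\tkerw$ and $\ssf\le\wf$. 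From $\ssf\le\tf-\tkerw$ the representing operator satisfies $0\le A\le P$, which forces $A=PAP$ and $\ran A\subseteq N$; from $\ssf\le\wf$ we get $\ssf[x]=0$ for $x\in\ker\wf$, so $A$ vanishes on $J(\ker\wf)$ and, by continuity, on $N$. The two facts together give $A=0$, i.e. $\ssf=\mathfrak{0}$, proving singularity. The maximality statement is the mirror image: if $\ssf\le\tf$ and $\ssf\ll_{\mathrm{ac}}\wf$, then its representing operator $0\le A\le I$ again vanishes on $N$ (because $\ssf[x]=0$ on $\ker\wf$), so $A=(I-P)A(I-P)\le I-P$, which is exactly $\ssf\le\tkerw$.

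For uniqueness, first suppose $\tkerw\le c\wf$ and let $\tf=\aaf+\ssf$ be any decomposition with $\aaf\ll_{\mathrm{ac}}\wf$ and $\ssf\perp\wf$. By maximality $\aaf\le\tkerw$, so $\rf:=\tkerw-\aaf$ is a nonnegative form with $\rf\le\tkerw\le c\wf$ and $\rf\le\ssf$. Now a dominated minorant of a singular form must vanish: after scaling, $\tfrac{1}{\max(1,c)}\rf\le\wf$ and $\tfrac{1}{\max(1,c)}\rf\le\ssf$, so singularity of $\ssf$ forces $\rf=\mathfrak{0}$, i.e. $\aaf=\tkerw$. Hence the decomposition is unique.

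The converse --- that non-domination destroys uniqueness --- is the step I expect to be the real obstacle. Here the task is to manufacture a second decomposition: to find a nonzero form $\rf$ with $0\le\rf\le\tkerw$ for which $(\tf-\tkerw)+\rf$ is still $\wf$-singular, so that $\tf=(\tkerw-\rf)+\big((\tf-\tkerw)+\rf\big)$ is a short-type decomposition with absolutely continuous part $\tkerw-\rf\ne\tkerw$. The natural source of $\rf$ is the Lebesgue (regular--singular) decomposition of $\tkerw$ itself with respect to $\wf$: when $\tkerw$ is not even almost dominated, its singular part is nonzero and can be transferred into $\tf-\tkerw$; the delicate case is when $\tkerw$ is almost dominated but still unbounded relative to $\wf$, where one must split off a nonzero singular piece by a spectral truncation of the (closable but unbounded) canonical embedding $\mathscr{H}_{\wf}\to\Ht$. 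Verifying that this transferred piece does not spoil the singularity of the new remainder is the crux, and is where the failure of domination is used essentially.
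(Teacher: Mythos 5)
Everything you actually prove, you prove correctly: the identification $\tkerw[x]=\|(I-P)Jx\|_{\tf}^2$, absolute continuity of $\tkerw$, singularity of $\tf-\tkerw$, maximality, and the implication ``domination $\Rightarrow$ uniqueness'' are all sound. (For the record, the survey gives no proof of this theorem at all --- it quotes it from \cite{stt2}; the proof there gets your singularity and maximality steps by an elementary triangle-inequality computation with the seminorms $\ssf^{1/2}$ rather than via representing operators, but your operator route is equally valid.) The genuine gap is the converse half of the last assertion, that failure of domination destroys uniqueness: this you only announce as a plan, and it is exactly the part of the theorem that cannot be waved at.

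Moreover, your diagnosis of that plan is partly inverted. The step you call the crux --- checking that the transferred piece does not spoil singularity --- is routine: if $\rf\le\tkerw$ and $\rf\perp\wf$, and if $\ssf\le(\tf-\tkerw)+\rf$ with $\ssf\le\wf$, then $\ssf$ and $\rf$ vanish on $\ker\wf$, so for every $x\in\Xx$ and $y\in\ker\wf$
\begin{align*}
\ssf[x]^{1/2}\le\ssf[x-y]^{1/2}\le(\tf-\tkerw)[x-y]^{1/2}+\rf[x-y]^{1/2}\le(\tf-\tkerw)[x-y]^{1/2}+\rf[x]^{1/2},
\end{align*}
and taking the infimum over $y\in\ker\wf$ (which annihilates the first term, exactly as in your singularity argument) gives $\ssf\le\rf$, hence $\ssf=\mathfrak{0}$. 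The real crux is manufacturing a \emph{nonzero} $\wf$-singular $\rf\le\tkerw$ when $\tkerw$ is almost dominated but not dominated, and there the tool you propose fails: spectral truncations of the embedding are not $\wf$-singular (a compression to a bounded spectral interval is $\wf$-dominated, and a tail compression over $[n,\infty)$ can still contain $\wf$-dominated rank-one summands coming from eigenvectors with finite eigenvalue). What does work is this: the embedding $\Gamma:\Xx/_{\ker\wf}\to\mathcal{H}_{\tkerw}$, $x+\ker\wf\mapsto x+\ker\tkerw$, is densely defined and unbounded precisely because domination fails, hence $\dom\Gamma^*\neq\mathcal{H}_{\tkerw}$ (an everywhere-defined adjoint is closed, hence bounded, forcing $\Gamma$ bounded); choose a unit vector $\eta\notin\dom\Gamma^*$ and set $\rf[x]:=|(x+\ker\tkerw\mid\eta)_{\tkerw}|^2$. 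Then $\rf\neq\mathfrak{0}$, $\rf\le\tkerw$, and $\rf$ is not $\wf$-dominated; since every form below a rank-one form is a scalar multiple of it, non-domination of $\rf$ is equivalent to $\wf$-singularity of $\rf$, and the transfer above finishes the construction. Alternatively --- and this is the cheapest route inside this paper, essentially the one taken in \cite{stt2} --- reduce to Theorem \ref{LDT}: if the short-type decomposition is unique, then $\tf=\dwt+(\tf-\dwt)$, being itself an (absolutely continuous, singular) decomposition, must coincide with it, so $\dwt=\tkerw$; moreover every Lebesgue-type decomposition is a short-type one, so the Lebesgue-type decomposition is unique as well, and the uniqueness criterion in Theorem \ref{LDT} then says that $\dwt=\tkerw$ is dominated by $\wf$.
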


A decomposition of $\tf$ into strongly $\wf$-absolutely continuous (or $\wf$-almost dominated) and $\wf$-singular parts is called Lebesgue-type decomposition. This is a generalization of the well-known decomposition result of T. Ando \cite{ando1976} (see also \cite{tarcsay}). The existence of such a decomposition for forms was proved first by Hassi, Sebesty\'en, and de Snoo in \cite{lebdec}. In order to present their result we need to introduce the notion of parallel sum. The parallel sum $\tf:\wf$ of the forms $\tf$ and $\wf$ is determined by the formula
\begin{align*}
\forall x\in\Xx:\quad(\tf:\wf)[x]:=\inf_{y\in\Xx}\big\{\tf[x-y]+\wf[y]\big\}.
    \end{align*}
We will see that the form 
\begin{align*}
\dwt:=\sup\limits_{n\in\mathbb{N}}(\tf:n\wf)
\end{align*}
plays an important role in this paper. (For the properties of parallel addition and the operator $\mathbf{D}$ see \cite[Proposition 2.3, Lemma 2.4]{lebdec}.) 

Primarily, $\dwt$ is the so-called almost dominated part of $\tf$ with respect to $\wf$, as the following fundamental theorem states \cite[Theorem 2.11]{lebdec} (see also \cite[Theorem 2.3]{stt1} and \cite[Theorem 3]{strn}).

\begin{theorem}\label{LDT}
Let $\tf$ and $\wf$ be forms on $\Xx$. Then the decomposition
\begin{align*}
\tf=\dwt+(\tf-\dwt)
\end{align*}
is a Lebesgue-type decomposition of $\tf$ with respect to $\wf$. That is, $\dwt$ is almost dominated by $\wf$, $(\tf-\dwt)$ is $\wf$-singular. Furthermore,
$\dwt$ is the largest element of the set
\begin{align*}
\big\{\ssf\in\fpx~\big|~(\ssf\leq\tf)~\wedge~(\ssf\ll_{\mathrm{ad}}\wf) \big\}.
\end{align*}
The decomposition is unique precisely when $\dwt$ is dominated by $\wf$.
\end{theorem}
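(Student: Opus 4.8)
The plan is to work directly with $\dwt=\sup_{n\in\dupN}(\tf:n\wf)$ and to deduce almost everything from the maximality clause, which I regard as the core of the theorem. I would first dispatch the elementary assertions. Because $n\mapsto n\wf$ is nondecreasing and parallel addition is monotone in each argument (see \cite[Proposition 2.3]{lebdec}), the sequence $(\tf:n\wf)_{n\in\dupN}$ is nondecreasing; since $\tf:n\wf\le\tf$ for every $n$, the pointwise supremum $\dwt$ exists, is a form, and satisfies $\dwt\le\tf$, so that $\tf-\dwt$ is a genuine nonnegative form. Moreover $\tf:n\wf\le n\wf$, hence every term is dominated by $\wf$; as $\dwt$ is the pointwise supremum of this nondecreasing sequence of $\wf$-dominated forms, it is almost dominated by $\wf$ directly from the definition of $\ll_{\mathrm{ad}}$.

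The crux is the maximality of $\dwt$. Let $\ssf\in\fpx$ satisfy $\ssf\le\tf$ and $\ssf\ll_{\mathrm{ad}}\wf$, and write $\ssf=\sup_k\ssf_k$ with each $\ssf_k$ $\wf$-dominated, say $\ssf_k\le d_k\wf$. The key input I would take from \cite[Lemma 2.4]{lebdec} is that $\mathbf{D}_{\wf}$ fixes every $\wf$-dominated form, i.e. $\mathbf{D}_{\wf}\ssf_k=\ssf_k$; this can be seen by minimizing $\ssf_k[x-y]+n\wf[y]$ over $y$ and using the triangle inequality for the seminorm $\sqrt{\wf[\cdot]}$ to push the infimum up to $\ssf_k[x]$ as $n\to\infty$. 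Together with the monotonicity of $\mathbf{D}_{\wf}$ (inherited from that of parallel addition) and $\ssf_k\le\tf$, this yields $\ssf_k=\mathbf{D}_{\wf}\ssf_k\le\mathbf{D}_{\wf}\tf=\dwt$, and taking the supremum over $k$ gives $\ssf\le\dwt$. Hence $\dwt$ is the largest $\wf$-almost-dominated form below $\tf$.

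Singularity of $\tf-\dwt$ then comes for free. If $\ssf\le\tf-\dwt$ and $\ssf\le\wf$, then $\dwt+\ssf\le\tf$, and $\dwt+\ssf=\sup_n(\tf:n\wf+\ssf)$ is the pointwise supremum of the nondecreasing sequence $\tf:n\wf+\ssf\le(n+1)\wf$ of $\wf$-dominated forms, so $\dwt+\ssf\ll_{\mathrm{ad}}\wf$. Maximality forces $\dwt+\ssf\le\dwt$, whence $\ssf=0$; this is precisely $\tf-\dwt\perp\wf$.

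For uniqueness I would split into the two implications. Assume first that $\dwt\le c\wf$ for some $c$, and let $\tf=\aaf+\bbf$ be an arbitrary Lebesgue-type decomposition, so $\aaf\ll_{\mathrm{ad}}\wf$ and $\bbf\perp\wf$. Maximality gives $\aaf\le\dwt$; setting $\mathfrak{c}:=\dwt-\aaf\ge0$ we get $\bbf=(\tf-\dwt)+\mathfrak{c}\ge\mathfrak{c}$, so $\mathfrak{c}\perp\wf$ because singularity is inherited by smaller forms. On the other hand $\mathfrak{c}\le\dwt\le c\wf$ is $\wf$-dominated, hence $\wf$-almost-dominated; writing it as a supremum of $\wf$-dominated pieces, each of which is also $\wf$-singular and therefore zero by the definition of singularity, we conclude $\mathfrak{c}=0$, i.e. $\aaf=\dwt$ and the decomposition is the canonical one. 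I expect the converse to be the main obstacle: when $\dwt$ is not dominated by $\wf$ one must manufacture a nonzero $\wf$-singular form $\mathfrak{c}$ with $\mathfrak{c}\le\dwt$, for then $\tf=(\dwt-\mathfrak{c})+\big((\tf-\dwt)+\mathfrak{c}\big)$ is a genuinely different Lebesgue-type decomposition (using that a sum of two $\wf$-singular forms is again $\wf$-singular and that $\dwt-\mathfrak{c}$ stays almost dominated). Producing such a $\mathfrak{c}$ is exactly where the unboundedness of the canonical embedding attached to the pair $\dwt,\wf$ must be exploited, and this Hilbert-space construction, carried out in \cite[Theorem 2.11]{lebdec}, is the step I would expect to require the real work.
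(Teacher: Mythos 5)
First, a structural point: the paper contains no proof of Theorem \ref{LDT} at all — it is quoted as a known result, with the proof deferred to \cite[Theorem 2.11]{lebdec} (see also \cite[Theorem 2.3]{stt1} and \cite[Theorem 3]{strn}). So there is no in-paper argument to compare yours against; your proposal has to stand on its own, and most of it does. The monotonicity of $(\tf:n\wf)_{n\in\dupN}$, the bounds $\tf:n\wf\leq\tf$ and $\tf:n\wf\leq n\wf$, and hence $\dwt\leq\tf$ and $\dwt\ll_{\mathrm{ad}}\wf$, are correct. The maximality argument is sound: the fixed-point property $\mathbf{D}_{\wf}\ssf_k=\ssf_k$ for $\wf$-dominated $\ssf_k$ is exactly the right lemma (your triangle-inequality sketch for the seminorm $\sqrt{\ssf_k[\,\cdot\,]}$ fills in correctly), and together with monotonicity of $\mathbf{D}_{\wf}$ it yields $\ssf\leq\dwt$ for every $\wf$-almost dominated $\ssf\leq\tf$. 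Deriving the $\wf$-singularity of $\tf-\dwt$ from maximality is clean, and the first half of the uniqueness claim is also correct, though your phrase ``therefore zero by the definition of singularity'' hides a scaling step: a form $\mathfrak{c}$ with $\mathfrak{c}\leq c\,\wf$ and $\mathfrak{c}\perp\wf$ is zero because $\frac{1}{\max(c,1)}\,\mathfrak{c}$ is a common lower bound of $\mathfrak{c}$ and $\wf$, not because $\mathfrak{c}$ itself is one.

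The genuine gap is the converse half of the last assertion. The statement ``the decomposition is unique precisely when $\dwt$ is dominated by $\wf$'' is an equivalence, and you prove only that domination implies uniqueness. For the other implication you must show that if $\dwt$ is \emph{not} dominated by $\wf$, then there exists a nonzero $\wf$-singular form $\mathfrak{c}\leq\dwt$ such that $\dwt-\mathfrak{c}$ remains $\wf$-almost dominated; this produces the second decomposition $\tf=(\dwt-\mathfrak{c})+\big((\tf-\dwt)+\mathfrak{c}\big)$. You correctly identify this as the missing ingredient and correctly locate where it must come from (the unboundedness of the canonical embedding associated with the pair $\dwt$, $\wf$), but you do not carry out the construction — you defer it to \cite{lebdec}. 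Since the paper itself defers the entire proof, this is consistent with the paper's treatment; but as a self-contained proof of the statement as formulated, your proposal establishes the decomposition, the maximality of $\dwt$, and the sufficiency of domination for uniqueness, while the necessity — arguably the hardest part of \cite[Section 2]{lebdec} — remains unproved.
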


Moreover, for the almost dominated part we have the following two formulae (for the proofs see \cite[Lemma 2.2, Theorem 2.3]{stt1} and \cite[Theorem 2.7]{stt1}):
\begin{align*}
(\dwt)[x]=\inf\big\{\lim_{n\rightarrow +\infty}\tform[x-x_n]~\big|~(x_n)_{n\in\mathbb{N}}\in\Xx^{\mathbb{N}}:(\tf[x_n-x_m]\to 0)~\wedge~(\wf[x_n]\to 0)\big\}
\end{align*}
and
\begin{align*}
(\dwt)[x]=\inf\big\{\liminf_{n\rightarrow +\infty}\tform[x-x_n]~\big|~(x_n)_{n\in\mathbb{N}}\in\Xx^{\mathbb{N}}:~\wf[x_n]\to 0\big\}.
\end{align*} 
The first interesting observation is \cite[Theorem 1.5]{ldocvnf}, which implies for example for every finite measures $\mu$ and $\nu$ that the $\nu$-absolutely continuous part of $\mu$ is absolutely continuous with respect to the $\mu$-absolutely continuous part of $\nu$ \cite[Theorem 3.5 (b)]{ldocvnf}. An analogous result regarding representable functionals can be found in \cite{tarcsay-funk}. We present here a proof which is simpler than the original one in \cite{ldocvnf}.

\begin{theorem}\label{abszfolytabszfolyt}
Let $\tf$ and $\wf$ be forms on $\Xx$, and consider their Lebesgue-type decompositions with respect to each other. Then the almost dominated parts are mutually almost dominated, i.e.,
\begin{align*}
\dwt\ll_{\mathrm{ad}}\dtw\qquad\mbox{and}\qquad\dtw\ll_{\mathrm{ad}}\dwt.
\end{align*}
\end{theorem}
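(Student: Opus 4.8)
The plan is to translate the statement into strong absolute continuity and then to run a perturbation argument based on the second infimum formula for the almost dominated part. Put $\tf_a:=\dwt$ and $\wf_a:=\dtw$ for the two almost dominated parts. By the equivalence of almost domination and strong absolute continuity (the characterization theorem opening this section), the assertion $\tf_a\ll_{\mathrm{ad}}\wf_a$ is the same as $\tf_a\ll_{\mathrm{s}}\wf_a$, and the symmetric relation $\wf_a\ll_{\mathrm{ad}}\tf_a$ then follows by interchanging the roles of $\tf$ and $\wf$. Hence it suffices to establish $\tf_a\ll_{\mathrm{s}}\wf_a$. The single piece of strong continuity I shall feed into the argument is that, by Theorem~\ref{LDT}, $\tf_a=\dwt$ is almost dominated by $\wf$, so $\tf_a\ll_{\mathrm{s}}\wf$.

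Accordingly, fix a sequence $(x_n)_{n\in\mathbb{N}}\in\Xx^{\mathbb{N}}$ with $\tf_a[x_n-x_m]\to0$ and $\wf_a[x_n]\to0$, the goal being $\tf_a[x_n]\to0$. The difficulty is a mismatch: I only have strong continuity of $\tf_a$ relative to $\wf$, whereas the hypothesis controls $\wf_a=\dtw$ rather than $\wf$ itself. To bridge this I apply to each $x_n$ the second infimum formula for the almost dominated part, which for $\dtw$ reads $(\dtw)[x]=\inf\{\liminf_k\wf[x-z_k]:(z_k)\in\Xx^{\mathbb{N}},\ \tf[z_k]\to0\}$. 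Choosing a near-optimal minimizing sequence for $x_n$ and then a sufficiently large index within it, I obtain a single vector $z_n\in\Xx$ with $\tf[z_n]\to0$ and $\wf[x_n-z_n]\to0$ (the latter because the formula lets me push $\wf[x_n-z_n]$ below $\wf_a[x_n]+\tfrac1n$).

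Now set $x_n':=x_n-z_n$. By construction $\wf[x_n']\to0$. Since $\tf_a\le\tf$ we get $\tf_a[z_n]\le\tf[z_n]\to0$, and then the triangle inequality for the seminorm $\sqrt{\tf_a[\,\cdot\,]}$ yields simultaneously that the perturbed sequence stays $\tf_a$-Cauchy, namely $\tf_a[x_n'-x_m']\to0$, and that $\tf_a[x_n]\to0$ will follow once $\tf_a[x_n']\to0$ is known. Applying $\tf_a\ll_{\mathrm{s}}\wf$ to $(x_n')$ then delivers exactly $\tf_a[x_n']\to0$, which closes the argument. I expect the only genuinely delicate step to be the extraction of the single perturbing vectors $z_n$ from the doubly-indexed near-optimizers supplied by the infimum formula; everything else is the triangle inequality together with the already-quoted equivalence between almost domination and strong absolute continuity.
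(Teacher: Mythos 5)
Your proof is correct, but it takes a genuinely different route from the paper's. The paper argues purely order-theoretically: it first proves the auxiliary observation that $\uf_1\leq\uf_2$ and $\uf_1\ll_{\mathrm{ad}}\vf$ imply $\uf_1\ll_{\mathrm{ad}}\mathbf{D}_{\uf_2}\vf$ --- taking a nondecreasing sequence $\uf_{1,n}\leq c_n\vf$ with supremum $\uf_1$, each term satisfies $\uf_{1,n}=\mathbf{D}_{\uf_2}\uf_{1,n}\leq c_n\mathbf{D}_{\uf_2}\vf$ by the fixed-point (maximality) property of $\mathbf{D}$ from Theorem \ref{LDT} together with monotonicity and homogeneity of the parallel sum --- and then simply applies this with $\uf_1=\dwt$, $\uf_2=\tf$, $\vf=\wf$. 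Your argument instead works at the level of sequences: you convert almost domination into strong absolute continuity via the equivalence theorem, use the $\liminf$ formula of \cite{stt1} quoted after Theorem \ref{LDT} to extract the perturbing vectors $z_n$, and finish with triangle-inequality estimates for the seminorm $\sqrt{\tf_a[\,\cdot\,]}$. Both proofs are valid and rely only on results quoted in Section \ref{sect1}; the paper's buys a slightly more general lemma (any $\vf$-almost dominated minorant of $\uf_2$ is automatically $\mathbf{D}_{\uf_2}\vf$-almost dominated) with no sequential bookkeeping, while yours stays closer to the measure-theoretic intuition, exhibiting concretely how a $\dtw$-null sequence can be corrected by a $\tf$-null perturbation into a $\wf$-null sequence. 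The one delicate step you flag --- extracting a single $z_n$ per $x_n$ from the doubly indexed near-optimizers --- does go through: from $\liminf_k\wf[x_n-z^{(n)}_k]<(\dtw)[x_n]+\tfrac1n$ there are infinitely many indices $k$ with $\wf[x_n-z^{(n)}_k]<(\dtw)[x_n]+\tfrac2n$, and among these you may choose $k$ so large that additionally $\tf[z^{(n)}_k]<\tfrac1n$ (note the harmless loss of a factor $2$ compared with the bound $(\dtw)[x_n]+\tfrac1n$ stated in your sketch).
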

\begin{proof}
Observe first that if $\uf_1$, $\uf_2$, and $\vf$ are forms such that $\uf_1\leq\uf_2$ and $\uf_1\ll_{\mathrm{ad}}\vf$, then \begin{align*}\uf_1\ll_{\mathrm{ad}}\mathbf{D}_{\uf_2}\vf.\end{align*}
Indeed, if $\uf_1$ is almost dominated by $\vf$, then there exists a monotonically nondecreasing sequence of forms $(\uf_{1,n})_{n\in\mathbb{N}}$ such that $\sup\limits_{n\in\mathbb{N}}\uf_{1,n}=\uf$ and $\uf_{1,n}$ is dominated by $\vf$ for all $n\in\mathbb{N}$ (i.e., $\uf_{1,n}\leq c_n\vf$ for some $c_n\geq 0$). Consequently,
\begin{align*}
\uf_{1,n}=\mathbf{D}_{\uf_2}\uf_{1,n}\leq\mathbf{D}_{\uf_2}c_n\vf=c_n\mathbf{D}_{\uf_2}\vf
\end{align*}
which means that $\uf_1\ll_{\mathrm{ad}}\mathbf{D}_{\uf_2}\vf$. Now, apply the previous observation with $\uf_1:=\dwt$, $\uf_2=\tf$, and $\vf=\wf$.
\end{proof}

\subsection{Order structure and some extremal problems}\label{os}
This subsection is devoted to investigating the connection between some order properties of $\big(\fpx,\leq\big)$ and the Lebesgue type decomposition.

The first natural question is whether the infimum (i.e., the greatest lower bound) $\tf\wedge\wf$ of $\tf$ and $\wf$ exists in $\fpx$. The infimum problem has a long history in the theory of Hilbert space operators. Kadison proved that the set of bounded self-adjoint operators is a so-called anti-lattice \cite{kadison}. For bounded positive operators the infimum problem was proved by Moreland and Gudder provided the space is finite dimensional \cite{gudder}. 

The general case was solved by T. Ando in \cite{ando1999}. He showed that the infimum of two positive operators
$A$ and $B$ exists in the positive cone if and only if the generalized shorts (for this notion see \cite{ando1976}) $[B]A$ and $[A]B$ are comparable. An analogous result concerning forms was given in \cite{titkosinf}.

Recall that the infimum of $\tf$ and $\wf$ exists if there is a form denoted by $\tf\wedge\wf$, for which $\tf\wedge\wf\leq\tf$, $\tf\wedge\wf\leq\wf$, and the inequalities $\uf\leq\tf$ and $\uf\leq\wf$ imply that $\uf\leq\tf\wedge\wf$.

\begin{theorem}\label{forminf}
Let $\tf,\wf\in\fpx$ be forms on $\Xx$. Then the following statements are equivalent.
    \begin{itemize}
        \item[(i)] $\dtw\leq\dwt$ or $\dwt\leq\dtw$.
        \item[(ii)] $\dtw\leq\tf$ or $\dwt\leq\wf$.
        \item[(iii)] The infimum $\tf\wedge\wf$ exists.
    \end{itemize}
\end{theorem}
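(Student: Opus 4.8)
The plan is to derive the three easy links from a single maximality observation and to isolate the real difficulty in the implication $(iii)\Rightarrow(i)$, where the mutual almost domination of Theorem \ref{abszfolytabszfolyt} does the decisive work. First I would show that the common lower bounds of $\{\tf,\wf\}$ coincide with the common lower bounds of $\{\dwt,\dtw\}$. If $\ssf\le\tf$ and $\ssf\le\wf$, then $\ssf$ is dominated by $\wf$ (the constant sequence $\ssf_n=\ssf$ exhibits $\ssf\ll_{\mathrm{ad}}\wf$), so the maximality clause of Theorem \ref{LDT} gives $\ssf\le\dwt$; symmetrically $\ssf\le\dtw$. The reverse inclusion is trivial from the always-valid $\dwt\le\tf$ and $\dtw\le\wf$. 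Granting this, $(i)\Leftrightarrow(ii)$ is formal: if $\dwt\le\dtw$ then $\dwt\le\dtw\le\wf$, which is an alternative of $(ii)$, while if $\dwt\le\wf$ then $\dwt$ is a common lower bound, hence $\dwt\le\dtw$ by the observation; the remaining alternatives are handled the same way. Likewise $(i)\Rightarrow(iii)$ is immediate: when $\dwt\le\dtw$ the form $\dwt$ is a common lower bound majorizing every common lower bound, so $\tf\wedge\wf=\dwt$, and symmetrically $\tf\wedge\wf=\dtw$ in the opposite case.

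It remains to prove $(iii)\Rightarrow(i)$. By the observation, if $\tf\wedge\wf$ exists it is the infimum of $\dwt$ and $\dtw$, so it suffices to show that two mutually almost dominated forms admitting an infimum are comparable. Since every lower bound of $\{\dwt,\dtw\}$ lies below $\dwt+\dtw$, I would pass to $\Hh_{\dwt+\dtw}$ and use the Radon--Nikodym Lemma (applied to $\dwt\le\dwt+\dtw$ and $\dtw\le\dwt+\dtw$) to represent the two forms by bounded positive operators $A$ and $B=I-A$. The correspondence is order preserving and carries the lower bounds of $\{\dwt,\dtw\}$ bijectively onto the positive operators below both $A$ and $I-A$, so the form infimum exists exactly when $A\wedge(I-A)$ exists in $\bph$. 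Now I would invoke Ando's operator criterion \cite{ando1999}: this infimum exists iff the generalized shorts $[I-A]A$ and $[A](I-A)$ are comparable. Finally, mutual almost domination (Theorem \ref{abszfolytabszfolyt}) translates, under the same identification, into $[I-A]A=A$ and $[A](I-A)=I-A$, so Ando's comparability collapses to the comparability of $A$ and $I-A$, i.e. of $\dwt$ and $\dtw$, which is $(i)$.

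The main obstacle is this last translation. One must verify that the operator $\mathbf{D}$ — i.e. the almost dominated part $\dwt=\sup_n(\tf:n\wf)$ — corresponds on $\Hh_{\dwt+\dtw}$ precisely to Ando's operator short, which amounts to checking that the Radon--Nikodym identification intertwines parallel addition of forms with parallel addition of operators. Once this is in place, almost domination of one form by another is literally the statement that the short equals the whole operator, and the reduction to \cite{ando1999} is complete; everything preceding it is bookkeeping with the maximality property of Theorem \ref{LDT}.
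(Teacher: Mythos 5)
Your proof is correct, but there is nothing in the paper to compare it against: Theorem \ref{forminf} is stated without proof, with a pointer to \cite{titkosinf}, so the relevant comparison is with that source. Your argument is, at its core, the same reduction to Ando's operator-infimum theorem \cite{ando1999} carried out there, organized differently. In \cite{titkosinf} the Radon--Nikodym identification is applied directly to the pair $(\tf,\wf)$: the two forms are represented by positive operators $A$ and $I-A$ on $\Hh_{\tf+\wf}$, the intertwining of form parallel sums with operator parallel sums shows that $\dwt$ and $\dtw$ correspond to the generalized shorts $[I-A]A$ and $[A](I-A)$, and Ando's criterion then yields statement (i) verbatim, with no appeal to Theorem \ref{abszfolytabszfolyt}. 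You instead first replace $(\tf,\wf)$ by $(\dwt,\dtw)$ via the common-lower-bound observation (correct, and it also gives (i)$\Leftrightarrow$(ii) and (i)$\Rightarrow$(iii) cheaply, identifying $\tf\wedge\wf$ with the smaller of $\dwt$, $\dtw$ when comparability holds), pass to $\Hh_{\dwt+\dtw}$, and use the mutual almost domination of Theorem \ref{abszfolytabszfolyt} to make the shorts trivial, so that Ando's comparability condition collapses to comparability of $A$ and $I-A$. The price is the extra ingredient (Theorem \ref{abszfolytabszfolyt}); the gain is that the translation of the shorts on the reduced pair becomes a tautology. Note that this does not let you escape the technical core you flagged: translating ``$\ll_{\mathrm{ad}}$'' into ``$[I-A]A=A$'' is exactly the statement that the identification intertwines $\mathbf{D}$ with the generalized short, i.e.\ that form parallel sums correspond to operator parallel sums (via the variational formula for the operator parallel sum plus density of $\Xx/_{\ker(\dwt+\dtw)}$ in $\Hh_{\dwt+\dtw}$), which is precisely the lemma the original proof establishes. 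With that verification supplied, your proof is complete and correct.
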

A nonzero form $\tf$ is called \emph{minimal}, if $\wf\leq\tf$ implies that $\lambda\tf=\wf$ for some $\lambda\geq 0$. Or equivalently (see \cite[Theorem 5.10]{psonf}), for every form $\wf$ there exists a $\lambda\geq0$ such that $\dtw=\lambda\tf$.
\begin{corollary}
Let $\tf$ be a minimal form on $\Xx$. Then for every $\wf\in\fpx$ the infimum $\tf\wedge\wf$ exists.
\end{corollary}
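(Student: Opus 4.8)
The plan is to reduce the existence of $\tf\wedge\wf$ to one of the equivalent conditions of Theorem~\ref{forminf}, exploiting the second (equivalent) description of minimality. Since $\tf$ is minimal, applied to the given $\wf$ it produces a constant $\lambda\geq0$ with $\dtw=\lambda\tf$. I would then recall the two inequalities that hold in every Lebesgue-type decomposition by Theorem~\ref{LDT}: the almost dominated parts always satisfy $\dtw\leq\wf$ and $\dwt\leq\tf$. Combining the first of these with minimality gives the relation $\lambda\tf=\dtw\leq\wf$, which is the engine of the whole argument. My target is condition (ii) of Theorem~\ref{forminf}, namely that $\dtw\leq\tf$ or $\dwt\leq\wf$.

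I would split according to the size of $\lambda$. If $\lambda\leq1$, then $\dtw=\lambda\tf\leq\tf$, so the first alternative in (ii) holds at once and the infimum exists. The substantive case is $\lambda>1$. Here the inequality $\lambda\tf\leq\wf$ rearranges to $\tf\leq\tfrac{1}{\lambda}\wf$, so $\tf$ is dominated by $\wf$ and is therefore (trivially, via a constant sequence) almost dominated by $\wf$. Since $\dwt$ is, by Theorem~\ref{LDT}, the largest form below $\tf$ that is almost dominated by $\wf$, and $\tf$ itself is such a form, I conclude $\dwt=\tf$. As $\lambda>1$ yields $\tfrac{1}{\lambda}\wf\leq\wf$, this gives $\dwt=\tf\leq\wf$, so the second alternative in (ii) holds and the infimum again exists.

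The one genuinely delicate step is the case $\lambda>1$: it is tempting to stop at $\lambda\tf\leq\wf$, but this bare inequality is not among the listed conditions. The point is to convert domination of $\tf$ by $\wf$ into the identity $\dwt=\tf$ through the maximality characterization of the almost dominated part in Theorem~\ref{LDT}. Once this is in place both cases land in condition (ii), and the corollary follows. The same bookkeeping equally verifies condition (i): when $\lambda\geq1$ one has $\dwt\leq\tf\leq\lambda\tf=\dtw$, while when $\lambda<1$ the form $\lambda\tf$ is both below $\tf$ and dominated by $\wf$, hence $\lambda\tf\leq\dwt$ by maximality, that is $\dtw\leq\dwt$.
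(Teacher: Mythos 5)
Your proof is correct and is essentially the argument the paper intends: the corollary appears without proof precisely because it is meant to follow by feeding the equivalent characterization of minimality, $\dtw=\lambda\tf$, into Theorem~\ref{forminf}, which is exactly what you do. Your handling of the case $\lambda>1$ — upgrading $\tf\leq\tfrac{1}{\lambda}\wf$ to $\dwt=\tf$ via the maximality clause of Theorem~\ref{LDT} — is the right way to close the one nontrivial gap, and your closing verification of condition (i) is likewise sound.
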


The Lebesgue decomposition theory of forms is encountered again by examining the extremal points of the convex set $[0,\tf]$. Here the segment $[\tf_1,\tf_2]$ for $\tf_1,\tf_2\fpx,~\tf_1\leq\tf_2$ is defined to be the convex set 
\begin{align*}
[\tf_1,\tf_2]=\big\{\ssf\in\fpx~\big|~\tf_1\leq\ssf\leq\tf_2\big\}.
\end{align*}
The following theorems characterize the extremal points of form segments; for the proofs and other references see \cite[Theorem 11]{cof} and \cite[Section 5]{psonf}.

\begin{theorem}\label{disjoint part}
Let $\uf$ and $\tf$ be forms on $\Xx$, such that $\uf\leq\tf$. The following statements are equivalent
\begin{itemize}
\item[$(i)$] $\uf$ and $\tf-\uf$ are singular,
\item[$(ii)$] $\mathbf{D}_{\uf}\tf=\uf$,
\item[$(iii)$] $\uf$ is an extreme point of the convex set $[0,\tf]$.
\end{itemize}
\end{theorem}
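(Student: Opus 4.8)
The plan is to prove the three conditions equivalent along the cycle $(i)\Rightarrow(ii)\Rightarrow(iii)\Rightarrow(i)$, after recording two preliminary facts. First, since $\uf\le\tf$ and $\uf$ is trivially dominated by itself, $\uf$ lies in the set $\{\ssf\in\fpx\mid(\ssf\le\tf)\wedge(\ssf\ll_{\mathrm{ad}}\uf)\}$, so the maximality clause of Theorem \ref{LDT} already gives $\uf\le\dut$; only the reverse inequality will require work. Second, I would isolate the elementary fact that a form which is at once almost dominated by $\uf$ and singular to $\uf$ must vanish: if $\ssf\ll_{\mathrm{ad}}\uf$ and $\ssf\perp\uf$, write $\ssf=\sup_n\ssf_n$ with $\ssf_n$ nondecreasing and $\ssf_n\le c_n\uf$ (arranging $c_n\ge1$); then $c_n^{-1}\ssf_n$ lies below both $\uf$ and $\ssf$, hence is $\mathfrak{0}$ by singularity, forcing $\ssf_n=\mathfrak{0}$ and so $\ssf=\mathfrak{0}$.

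For $(i)\Rightarrow(ii)$ I would compute $\dut$ pointwise from the variational formula stated just after Theorem \ref{LDT}, namely $\dut[x]=\inf\{\liminf_n\tf[x-x_n]\mid\uf[x_n]\to0\}$. Fix $x$ and write $\tf=\uf+(\tf-\uf)$. For $\dut[x]\ge\uf[x]$, note that along any sequence with $\uf[x_n]\to0$ the Cauchy--Schwarz inequality yields $\uf[x-x_n]\to\uf[x]$, whence $\tf[x-x_n]\ge\uf[x-x_n]\to\uf[x]$; taking the infimum recovers $\dut\ge\uf$. The reverse inequality is the heart of the matter: by hypothesis $\tf-\uf\perp\uf$, and since $\mathbf{D}_{\uf}(\tf-\uf)$ is almost dominated by $\uf$ and sits below the singular form $\tf-\uf$, the preliminary lemma gives $\mathbf{D}_{\uf}(\tf-\uf)=\mathfrak{0}$. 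Applying the same variational formula to $\tf-\uf$ then furnishes, for each $x$, a (diagonally chosen) sequence $z_k$ with $\uf[z_k]\to0$ and $(\tf-\uf)[x-z_k]\to0$; consequently $\tf[x-z_k]=\uf[x-z_k]+(\tf-\uf)[x-z_k]\to\uf[x]$, and the formula for $\dut$ forces $\dut[x]\le\uf[x]$. Hence $\dut=\uf$.

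For $(ii)\Rightarrow(iii)$, suppose $\dut=\uf$ and let $\uf=\tfrac12(\aaf+\bbf)$ with $\aaf,\bbf\in[0,\tf]$. Then $\aaf=2\uf-\bbf\le2\uf$, so $\aaf$ is dominated by $\uf$, hence almost dominated by $\uf$, while $\aaf\le\tf$; the maximality clause of Theorem \ref{LDT} thus gives $\aaf\le\dut=\uf$, and symmetrically $\bbf\le\uf$. Two forms lying below $\uf$ whose average equals $\uf$ must both coincide with $\uf$, so $\aaf=\bbf=\uf$ and $\uf$ is an extreme point of $[0,\tf]$.

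For $(iii)\Rightarrow(i)$ I would argue by contraposition: if $\uf$ and $\tf-\uf$ are not singular, there is a nonzero form $\ssf$ with $\ssf\le\uf$ and $\ssf\le\tf-\uf$, and then $\uf+\ssf$ and $\uf-\ssf$ both belong to $[0,\tf]$ (nonnegativity of $\uf-\ssf$ from $\ssf\le\uf$, and $\uf+\ssf\le\tf$ from $\ssf\le\tf-\uf$), are distinct since $\ssf\ne\mathfrak{0}$, and average to $\uf$, so $\uf$ fails to be extreme. The main obstacle throughout is the reverse inequality in $(i)\Rightarrow(ii)$: the entire argument hinges on turning the singularity of $\tf-\uf$ into approximating sequences $z_k$ that are simultaneously $\uf$-null and $(\tf-\uf)$-small, which is precisely what the vanishing of $\mathbf{D}_{\uf}(\tf-\uf)$ encodes through the variational formula, so the delicate points are the diagonal extraction of $z_k$ and the joint applicability of that formula to both $\tf$ and $\tf-\uf$.
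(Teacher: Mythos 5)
Your proof is correct, and there is in fact no in-paper argument to compare it against: the survey states Theorem \ref{disjoint part} without proof, deferring to \cite[Theorem 11]{cof} and \cite[Section 5]{psonf}. What your argument accomplishes is a reconstruction of the result using only the two ingredients this paper actually supplies: the maximality clause of Theorem \ref{LDT} (which gives $\uf\le\dut$ for free and, applied to $\aaf\le 2\uf$, settles $(ii)\Rightarrow(iii)$), and the $\liminf$-formula for $\dwt$ quoted after Theorem \ref{LDT} (which, applied to both $\tf$ and $\tf-\uf$, drives $(i)\Rightarrow(ii)$). The delicate steps all check out: your preliminary lemma is sound; singularity is inherited downward, so from $\mathbf{D}_{\uf}(\tf-\uf)\le\tf-\uf$ and $(\tf-\uf)\perp\uf$ you indeed get $\mathbf{D}_{\uf}(\tf-\uf)\perp\uf$, whence $\mathbf{D}_{\uf}(\tf-\uf)=\mathfrak{0}$ by the lemma; and the diagonal choice of $z_k$ works because $\liminf_{n}(\tf-\uf)[x-x_n^{(k)}]<1/k$ holds for infinitely many $n$ while $\uf[x_n^{(k)}]<1/k$ holds for all sufficiently large $n$, so both conditions can be met at a common index. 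Two cosmetic remarks: in $(ii)\Rightarrow(iii)$ you only test midpoints, which suffices since midpoint-extremality is equivalent to extremality in a convex set (alternatively, rerun your argument for $\uf=\lambda\aaf+(1-\lambda)\bbf$ using $\aaf\le\lambda^{-1}\uf$); and your Cauchy--Schwarz re-derivation of $\dut\ge\uf$ duplicates what your first preliminary observation already gave. Compared with the original proofs in the cited references, which go through the parallel-sum and complement machinery developed there, your route is self-contained relative to what this survey states, at the modest cost of invoking the variational formula twice and an explicit diagonal extraction.
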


\begin{theorem}\label{ex(t,t+w) substeq ex(0,t+w)}
 Let $\tf$ and $\wf$ be forms on $\Xx$. Then the following statements are equivalent
\begin{itemize}
\item[$(i)$] $\tf$ is an extreme point of $[0,\tf+\wf]$,
\item[$(ii)$] $\mathrm{ex}[\tf,\tf+\wf]\subseteq\mathrm{ex}[0,\tf+\wf]$.
\end{itemize}
Replacing $\wf$ with $\wf-\tf$ (if $\tf\leq\wf$) we have
\begin{align*}
\tf\in\mathrm{ex}[0,\wf]\hspace{0.3cm}\Leftrightarrow\hspace{0.3cm}\mathrm{ex}[\tf,\wf]\subseteq\mathrm{ex}[0,\wf].
\end{align*}
\end{theorem}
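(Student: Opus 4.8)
\noindent
The plan is to derive both implications from the extreme--point description in Theorem~\ref{disjoint part}, after using an affine bijection to pass between the two segments. The map $\ssf\mapsto\ssf-\tf$ is an affine bijection of $[\tf,\tf+\wf]$ onto $[0,\wf]$, so it sends extreme points to extreme points and $\mathrm{ex}[\tf,\tf+\wf]=\set{\tf+\rf}{\rf\in\mathrm{ex}[0,\wf]}$. By Theorem~\ref{disjoint part}, applied to the segments $[0,\wf]$ and $[0,\tf+\wf]$, the relevant extreme points are governed by singularity: $\rf\in\mathrm{ex}[0,\wf]$ iff $\rf\perp(\wf-\rf)$; statement $(i)$ says $\tf\perp\wf$; and $\tf+\rf\in\mathrm{ex}[0,\tf+\wf]$ iff $(\tf+\rf)\perp(\wf-\rf)$. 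Thus $(ii)$ is equivalent to the assertion that every $\rf\le\wf$ with $\rf\perp(\wf-\rf)$ also satisfies $(\tf+\rf)\perp(\wf-\rf)$, and the whole theorem reduces to showing that this holds exactly when $\tf\perp\wf$. The direction $(ii)\Rightarrow(i)$ is then immediate: $\tf$ is the least element of $[\tf,\tf+\wf]$ and hence an extreme point of it (if $\tf=\tfrac12(\ssf_1+\ssf_2)$ with $\ssf_1,\ssf_2\ge\tf$, then $\ssf_1-\tf$ and $\ssf_2-\tf$ are nonnegative forms summing to $\mathfrak{0}$, so both vanish), and applying $(ii)$ places $\tf$ in $\mathrm{ex}[0,\tf+\wf]$.

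\noindent
For $(i)\Rightarrow(ii)$ fix such an $\rf$ and let $\zf$ be any form with $\zf\le\tf+\rf$ and $\zf\le\wf-\rf$; I must prove $\zf=0$. The efficient route is to represent every form dominated by $\tf+\wf$ as a positive contraction on $\mathcal{H}_{\tf+\wf}$ --- this is the operator produced by the Radon--Nikodym--type lemma above --- so that the order of forms becomes the order of operators and singularity becomes triviality of the operator parallel sum. Keeping the same symbols for the operators and using $\tf+\wf=I$, the hypothesis $\tf\perp\wf$ means $\tf(I-\tf)=\tf:\wf=0$, i.e.\ $\tf=P$ is an orthogonal projection and $\wf=I-P$. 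Since $\rf\le\wf=I-P$ and $\zf\le\wf-\rf\le I-P$, both $\rf$ and $\zf$ are carried by $M:=\ker P$; on $M$ the relation $\rf\perp(\wf-\rf)$ becomes $\rf(I_M-\rf)=0$, so $\rf$ restricts to a projection $Q$. Now $\zf\le\tf+\rf$ gives $\zf\le Q$ on $M$ (as $P$ vanishes there), while $\zf\le\wf-\rf$ gives $\zf\le I_M-Q$; since $Q$ and $I_M-Q$ are complementary projections this forces $\zf=0$, whence $(\tf+\rf)\perp(\wf-\rf)$.

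\noindent
The crux --- and the only real obstacle --- is this last step. It is tempting to stay at the level of forms: from monotonicity of the parallel sum one checks readily (e.g.\ $\zf:n\tf\le\wf$ and $\zf:n\tf\le n\tf$ with $n\tf\perp\wf$) that $\zf\perp\tf$, and symmetrically $\zf\perp\rf$, and then one would like to conclude $\zf\perp(\tf+\rf)$ and so $\zf=0$. This fails, because singularity is \emph{not} additive: $\zf\perp\tf$ and $\zf\perp\rf$ do not give $\zf\perp(\tf+\rf)$, equivalently the parallel sum is not subadditive, $\tf:(\wf_1+\wf_2)\not\le\tf:\wf_1+\tf:\wf_2$ in general. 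What saves the proof is the rigidity supplied by $(i)$: singularity of $\tf$ and $\wf$ \emph{inside} $\tf+\wf$ makes them complementary projections, and it is this projection structure, rather than the mere pairwise singularity of $\zf$, that annihilates $\zf$. Finally, the displayed equivalence is obtained by applying $(i)\Leftrightarrow(ii)$ with $\wf$ replaced by $\wf-\tf$ (permissible when $\tf\le\wf$), since $\tf+(\wf-\tf)=\wf$.
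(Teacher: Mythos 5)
Your proof is correct, and the first thing to note is that the paper itself contains no proof of this theorem: it is quoted as a known result, with pointers to \cite[Theorem 11]{cof} and \cite[Section 5]{psonf}, whose arguments stay within the abstract form calculus (parallel addition and the operator $\mathbf{D}_{\uf}\tf$ underlying Theorem \ref{disjoint part}). Your reduction agrees with that framework: the affine bijection $\ssf\mapsto\ssf-\tf$ of $[\tf,\tf+\wf]$ onto $[0,\wf]$, combined with Theorem \ref{disjoint part}, converts the whole statement into the singularity implication ``if $\tf\perp\wf$ and $\rf\le\wf$ with $\rf\perp(\wf-\rf)$, then $(\tf+\rf)\perp(\wf-\rf)$'', and your treatment of $(ii)\Rightarrow(i)$ (take $\rf=0$, i.e.\ $\tf$ is the least, hence an extreme, point of $[\tf,\tf+\wf]$) is fine. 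From there your route is genuinely different: you transfer to the Hilbert space $\mathcal{H}_{\tf+\wf}$, representing every form dominated by $\tf+\wf$ as a bounded positive operator. This transfer is the one step you assert rather than prove --- specifically that the representation is an order isomorphism onto the positive operators of $\mathcal{H}_{\tf+\wf}$, so that singularity of dominated forms is equivalent to singularity of their representatives; this is standard and follows from the paper's Radon--Nikodym-type lemma together with the density of $\Xx/_{\ker(\tf+\wf)}$, but it deserves an explicit line, since both directions of the singularity transfer use the surjectivity of the correspondence. Granting it, your mechanism is clean and correct: $\tf\perp\wf$ with $\tf+\wf$ represented by $I$ forces the representatives of $\tf$ and $\wf$ to be complementary projections $P$ and $I-P$; then $\rf\perp(\wf-\rf)$ forces $\rf$ to act as a projection $Q\le I-P$; and any $\zf$ below both $\tf+\rf$ and $\wf-\rf$ is annihilated by the complementary projections $Q$ and $(I-P)-Q$. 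Your closing remark that singularity is not additive --- so that establishing $\zf\perp\tf$ and $\zf\perp\rf$ separately would not suffice --- correctly identifies the trap a naive form-level argument falls into. As for what each approach buys: yours makes the geometric content (complementary projections) transparent and is self-contained modulo the representation lemma, while the form-level proofs in the cited sources avoid any Hilbert space realization and remain inside the machinery on which the rest of the survey is built.
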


\section{Positive definite operator functions}\label{sect2}
In this section we carry over the previous theorems for positive definite operator functions. Szyma\'nski in \cite{s1} presented a general dilation theory governed by forms. We will see (after making some generalities) that the absolutely continuous part in Theorem \ref{STD} (and the almost dominated part in Theorem \ref{LDT}) is the largest dilatable part in some sense. Finally, we describe some order properties of kernels. Throughout this section we will use the notations of \cite[Section 7]{lebdec}, which is our main reference. Recall again that almost domination and strong absolute continuity (or closability) are equivalent concepts for forms.

Let $S$ be a non-empty set, and let $\Ee$ be a complex Banach space (with topological dual $\Ee^{\ast}$). The dual pairing of $x\in\Ee$ and $x^{\ast}\in\Ee^{\ast}$ is denoted by $\langle x,x^{\ast}\rangle$. Here the mapping
\begin{align*}
\langle\cdot,\cdot\rangle:\Ee\times\Ees\to\mathbb{C}
\end{align*}
is linear in its first, conjugate linear in its second variable. The Banach space of bounded linear operators from $\Ee$ to $\Ees$ will be denoted by $\mathbf{B}(\Ee,\Ees)$. 

Let $\Xx$ be the complex linear space of functions on $S$ with values in $\mathbf{B}(\Ee,\Ees)$ with finite support. We say that the function
\begin{align*}
\Kk:S\times S\to \mathbf{B}(\Ee,\Ees)
\end{align*}
is a \emph{positive definite operator function}, or shortly a \emph{kernel} on $S$ if
\begin{align*}
\forall f\in\Xx:\qquad\sum\limits_{s,t\in S}\langle f(t),\Kk(s,t)f(s)\rangle\geq 0.
\end{align*}
We associate a form with $\Kk$ by setting
\begin{align*}
\forall f,g\in\Xx:\qquad\wf_{\Kk}(f,g):=\sum\limits_{s,t\in S}\langle f(t),\Kk(s,t)g(s)\rangle.
\end{align*}
The set of kernels will be denoted by $\kpx$. If $\Kk$ and $\Ll$ are kernels, we write $\Kk\prec\Ll$ if $\wf_\Kk\leq\wf_\Ll$.

The following lemma states that the order structures of forms and of kernels are the same. Here  we give just an outline, for the complete proof see \cite[Lemma 7.1]{lebdec}. An analogous result in context of bounded positive operators can be found in \cite[(2.2) Theorem]{as}. 

\begin{lemma}
Let $\Kk\in\kpx$ be a kernel on $S$ with associated form $\wf_\Kk$ and let $\wf$ be a form on $\Xx$. Then the following statements are equivalent
\begin{itemize}
\item[$(i)$] $\wf\leq\wf_\Kk$,
\item[$(ii)$] $\wf=\wf_\Ll$ for a unique kernel $\Ll\prec\Kk$.
\end{itemize}
\end{lemma}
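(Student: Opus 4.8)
The plan is to use that a form on $\Xx$ is completely determined by its values on the functions supported at a single point of $S$, and to read the kernel $\Ll$ off from these values. The implication $(ii)\Rightarrow(i)$ is immediate, since $\Ll\prec\Kk$ means precisely $\wf_\Ll\leq\wf_\Kk$, so that $\wf=\wf_\Ll\leq\wf_\Kk$. Thus all the work lies in the converse: given $\wf\leq\wf_\Kk$, I must manufacture a kernel $\Ll$ with $\wf_\Ll=\wf$.

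I would first fix notation for the elementary functions: for $s\in S$ and $x\in\Ee$ let $x\delta_s\in\Xx$ denote the function with value $x$ at $s$ and $0$ elsewhere, so that every $f\in\Xx$ is the finite sum $f=\sum_{s\in S}f(s)\delta_s$. By sesquilinearity, $\wf$ is then determined by the scalars $\wf(x\delta_a,y\delta_b)$ for $a,b\in S$ and $x,y\in\Ee$. For each fixed pair $a,b$, the assignment $(x,y)\mapsto\wf(x\delta_a,y\delta_b)$ is sesquilinear on $\Ee\times\Ee$ (linear in $x$, conjugate-linear in $y$, because $x\mapsto x\delta_a$ is linear).

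The key step---and the only one using the hypothesis---is to show this sesquilinear form is bounded. Combining the Cauchy--Schwarz inequality for $\wf$ with the domination $\wf\leq\wf_\Kk$ gives
\begin{align*}
|\wf(x\delta_a,y\delta_b)|^2\leq\wf[x\delta_a]\,\wf[y\delta_b]\leq\wf_\Kk[x\delta_a]\,\wf_\Kk[y\delta_b]=\langle x,\Kk(a,a)x\rangle\,\langle y,\Kk(b,b)y\rangle,
\end{align*}
and the right-hand side is at most $\|\Kk(a,a)\|\,\|\Kk(b,b)\|\,\|x\|^2\|y\|^2$. Hence $(x,y)\mapsto\wf(x\delta_a,y\delta_b)$ is a bounded sesquilinear form, and by the canonical identification of bounded sesquilinear forms on $\Ee\times\Ee$ with operators in $\mathbf{B}(\Ee,\Ees)$ afforded by the dual pairing, there is a unique $\Ll(b,a)\in\mathbf{B}(\Ee,\Ees)$ with $\langle x,\Ll(b,a)y\rangle=\wf(x\delta_a,y\delta_b)$ for all $x,y\in\Ee$. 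This defines the candidate $\Ll\colon S\times S\to\mathbf{B}(\Ee,\Ees)$.

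The remaining verifications I expect to be routine. Expanding arbitrary $f,g\in\Xx$ into elementary functions and using sesquilinearity yields $\wf_\Ll(f,g)=\wf(f,g)$, so $\wf_\Ll=\wf$; in particular $\wf_\Ll[f]=\wf[f]\geq0$ for every $f$, which is exactly positive definiteness of $\Ll$, so $\Ll$ is a genuine kernel, and $\wf_\Ll=\wf\leq\wf_\Kk$ gives $\Ll\prec\Kk$. Uniqueness is forced similarly: any kernel $\Ll'$ with $\wf_{\Ll'}=\wf$ satisfies $\langle x,\Ll'(b,a)y\rangle=\wf(x\delta_a,y\delta_b)=\langle x,\Ll(b,a)y\rangle$ for all $x,y$, and since $\Ees$ is the dual of $\Ee$ the pairing separates points, so $\Ll'=\Ll$. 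The substantive point throughout is the boundedness estimate, which is what turns the purely order-theoretic datum $\wf\leq\wf_\Kk$ into bona fide operator-valued data.
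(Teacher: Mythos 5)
Your proof is correct and follows essentially the same route as the paper's: both define $\Ll$ pointwise on the single-point functions (your $x\delta_a$ is the paper's $h_{a,x}$) and use the same Cauchy--Schwarz-plus-domination estimate to land in $\mathbf{B}(\Ee,\Ees)$. In fact you spell out the verifications ($\wf_\Ll=\wf$, positive definiteness, uniqueness) that the paper's outline leaves implicit.
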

\begin{proof}
Implication $(ii)\Rightarrow(i)$ follows from the definitions. To prove the converse implication define for each $s\in S$ and $x\in\Ee$ the function
\begin{align*}
h_{s,x}\in\Xx; \qquad \forall u\in S: \quad h_{s,x}(u):=\delta_s(u)x
\end{align*}
where $\delta_s$ is the Dirac function concentrated to $s$. Now, define $\Ll$ pointwise as follows. For each $s,t\in S$
\begin{align*}
\forall x,y\in\Ee:\qquad\langle x,\Ll(s,t)y\rangle:=\wf(h_{t,x},h_{s,y})
\end{align*}
It follows from the nonnegativity of $\wf[\,\cdot\,]$ that 
\begin{align*}\sum\limits_{s,t\in S}\langle f(t),\Ll(s,t)f(s)\rangle 
\end{align*}
is nonnegative for all $f\in\Xx$. The only thing we need is to show that $L(s,t)\in\mathbf{B}(\Ee,\Ees)$. According to the Cauchy-Schwarz inequality, we have for all $x,y\in\Ee$ that
\begin{align*}
\begin{split}
|\langle x,L(s,t)y\rangle|^2=|\wf(h_{t,x},h_{s,y})|^2&\leq\wf[h_{t,x}]\cdot\wf[h_{s,y}]\leq\wf_\Kk[h_{t,x}]\cdot\wf_\Kk[h_{s,y}]
\\
=\langle x,K(t,t)x \rangle\cdot\langle y,K(s,s)y \rangle&\leq\|K(t,t)\|_{\mathbf{B}(\Ee,\Ees)}\cdot\|K(s,s)\|_{\mathbf{B}(\Ee,\Ees)}\cdot\|x\|_\Ee^2\cdot\|y\|_\Ee^2.
\end{split}
\end{align*}
\end{proof}
We emphasize here that the preceding is the key observation of this section. Most of the results gathered below are immediate consequences of this lemma, and the theorems listed in Section \ref{sect1}.

Now, we can define domination, almost domination, singularity, closability, and (strong) absolute continuity of kernels via their associated forms. We say that \emph{$\Kk$ is $\Ll$-almost dominated; $\Ll$-closable; (strongly)-$\Ll$-absolutely continuous} if $\wf_\Kk$ is $\wf_\Ll$-almost dominated; $\wf_\Ll$-closable; (strongly)-$\wf_\Ll$-absolutely continuous, respectively. \emph{$\Kk$ and $\Ll$ are singular} if $\wf_\Kk$ and $\wf_\Ll$ are singular.

Before stating the short-type and Lebesgue-type decomposition of kernels, we mention a result of W. Szyma\'nski (reduced to our less general setting). For the details we refer the reader to \cite[(3.5) Theorem]{s1}.

\begin{theorem}\label{dilat}
Let $\Kk,\Ll\in\kpx$ be kernels on $S$ with associated forms $\wf_\Kk$ and $\wf_\Ll$. Then
\begin{itemize}
\item[$(a)$]
$\Kk$ is absolutely continuous with respect to $\Ll$ (i.e., $\ker\wf_\Ll\subseteq\ker\wf_\Kk$) if and only if there exists a Hilbert space $\mathcal{H}$ and a linear mapping $T:\Xx/_{\ker\wf_\Ll}\to\mathcal{H}$ such that
\begin{align*}
\langle y,K(s,t)x\rangle=\big(T(h_{t,y}+\ker\wf_\Ll)~\big|~T(h_{s,x}+\ker\wf_\Ll)\big)_{\mathcal{H}},
\end{align*}
\item[$(b)$] $\Kk$ is strongly absolutely continuous with respect to $\Ll$ (i.e., $\wf_\Kk$ is strongly $\wf_\Ll$-absolutely continuous) if and only if 
there exists a Hilbert space $\mathcal{H}$ and a closed linear mapping $T:\Xx/_{\ker\wf_\Ll}\to\mathcal{H}$ such that
\begin{align*}
\langle y,K(s,t)x\rangle=\big(T(h_{t,y}+\ker\wf_\Ll)~\big|~T(h_{s,x}+\ker\wf_\Ll)\big)_{\mathcal{H}}.
\end{align*}
\end{itemize}
\end{theorem}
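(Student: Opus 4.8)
The plan is to reduce both equivalences to a single algebraic identity relating $\wf_\Kk$ to the map $T$, and then read off absolute continuity (resp. closability) from it. The starting observation is that every $f\in\Xx$ decomposes as a finite sum $f=\sum_{s\in S}h_{s,f(s)}$ of elementary functions, and that $\wf_\Kk(h_{t,y},h_{s,x})=\langle y,\Kk(s,t)x\rangle$ directly from the definition of $\wf_\Kk$. Consequently, whenever a Hilbert space $\mathcal{H}$ and a linear $T:\Xx/_{\ker\wf_\Ll}\to\mathcal{H}$ satisfy the displayed formula on elementary functions, bilinearity of $\wf_\Kk$, linearity of $T$, and sesquilinearity of the inner product upgrade it to the global identity
\begin{align*}
\forall f,g\in\Xx:\qquad \wf_\Kk(f,g)=\big(T(f+\ker\wf_\Ll)~\big|~T(g+\ker\wf_\Ll)\big)_{\mathcal{H}},
\end{align*}
and in particular $\wf_\Kk[f]=\|T(f+\ker\wf_\Ll)\|_{\mathcal{H}}^2$. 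This identity, which needs no topological hypothesis on $T$, is the workhorse for both converse implications.

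For the forward implication in $(a)$, assuming $\ker\wf_\Ll\subseteq\ker\wf_\Kk$, I would take $\mathcal{H}:=\mathcal{H}_{\wf_\Kk}$ and let $T$ be the canonical embedding $f+\ker\wf_\Ll\mapsto f+\ker\wf_\Kk$. As noted in Section \ref{sect1}, absolute continuity is exactly the condition guaranteeing that this embedding is well defined (and it is plainly linear); the required formula is then immediate from $(h_{t,y}+\ker\wf_\Kk\,|\,h_{s,x}+\ker\wf_\Kk)_{\wf_\Kk}=\wf_\Kk(h_{t,y},h_{s,x})=\langle y,\Kk(s,t)x\rangle$. For the converse in $(a)$, I use the workhorse identity: if $f\in\ker\wf_\Ll$ then $f+\ker\wf_\Ll=0$, hence $\wf_\Kk[f]=\|T(0)\|_{\mathcal{H}}^2=0$, so $f\in\ker\wf_\Kk$, which is precisely $\ker\wf_\Ll\subseteq\ker\wf_\Kk$.

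Part $(b)$ follows the same two-step scheme, but with closability playing the role previously played by mere well-definedness. For the forward direction I take $T$ to be the closure of the canonical embedding from part $(a)$; strong absolute continuity is by definition the statement that this embedding is closable, so its closure is a closed linear operator agreeing with the embedding on $\Xx/_{\ker\wf_\Ll}$, which keeps the displayed formula intact (note that strong absolute continuity implies absolute continuity, so the embedding exists to begin with). For the converse, suppose $T$ is closed and let $(f_n)_{n\in\mathbb{N}}$ satisfy $\wf_\Kk[f_n-f_m]\to 0$ and $\wf_\Ll[f_n]\to 0$. Writing $\xi_n:=f_n+\ker\wf_\Ll$, the first condition says $(T\xi_n)_{n\in\mathbb{N}}$ is Cauchy in $\mathcal{H}$, hence convergent to some $\eta$, while the second says $\xi_n\to 0$ in $\mathcal{H}_{\wf_\Ll}$; closedness of $T$ then forces $\eta=T0=0$, so $\wf_\Kk[f_n]=\|T\xi_n\|_{\mathcal{H}}^2\to 0$, which is strong absolute continuity.

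The step I expect to require the most care is pinning down the exact sense in which $T$ is \emph{closed}: the map is a priori only defined on the dense subspace $\Xx/_{\ker\wf_\Ll}$ of $\mathcal{H}_{\wf_\Ll}$, so one must be explicit that closedness refers to closedness of its graph inside $\mathcal{H}_{\wf_\Ll}\times\mathcal{H}$, equivalently that the relevant embedding is closable, and check that this is exactly what the closability definition of strong absolute continuity provides. Once that identification is made, the convergence argument above goes through verbatim, and everything else is the routine sesquilinear bookkeeping needed to pass from the elementary-function identity to the global one.
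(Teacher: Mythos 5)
Your proof is correct, but note that the paper itself contains no proof to compare it with: Theorem \ref{dilat} is quoted as a specialization of Szyma\'nski's result, and the reader is referred to \cite[(3.5) Theorem]{s1} for the details. Your argument is the natural self-contained one, and it rests on precisely the observation the paper records in Section \ref{sect1}: $\wf_\Ll$-absolute continuity of $\wf_\Kk$ means the canonical embedding $f+\ker\wf_\Ll\mapsto f+\ker\wf_\Kk$ is well defined on $\Xx/_{\ker\wf_\Ll}$, while strong absolute continuity means this embedding is closable. Your reduction of the pointwise formula to the global identity $\wf_\Kk(f,g)=\big(T(f+\ker\wf_\Ll)\,\big|\,T(g+\ker\wf_\Ll)\big)_{\mathcal{H}}$ via $f=\sum_{s}h_{s,f(s)}$ and sesquilinearity is sound, both converses are correct, and the forward direction of $(a)$ is the expected GNS-type construction.

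The only delicate point is the one you flagged yourself: what ``closed linear mapping $T:\Xx/_{\ker\wf_\Ll}\to\mathcal{H}$'' means. Your resolution --- $T$ is a closed operator from $\mathcal{H}_{\wf_\Ll}$ to $\mathcal{H}$ whose domain contains the quotient, the displayed formula referring only to its values on cosets of elements of $\Xx$ --- is not merely a convenience but is forced: under the strict reading ($\dom T=\Xx/_{\ker\wf_\Ll}$ and $\gra T$ closed in $\mathcal{H}_{\wf_\Ll}\times\mathcal{H}$) the forward implication of $(b)$ is false. Indeed, for $\Kk=\Ll$ any $T$ satisfying the formula is an isometry on the quotient, and an isometry with dense, incomplete domain never has closed graph, yet $\wf_\Kk$ is trivially strongly $\wf_\Ll$-absolutely continuous. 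For the same reason, your parenthetical claim that graph-closedness of the map on the quotient is ``equivalent'' to closability of the embedding is not literally true; what your proof actually uses, and what is true, is that the existence of a \emph{closed extension} satisfying the formula is equivalent to closability. Since your forward direction takes $T$ to be the closure of the embedding, and your Cauchy-sequence argument for the converse works verbatim for closable (not necessarily closed) $T$, the proof as executed establishes the equivalence under the correct reading.
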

The operator $T$ is called the \emph{dilation} of $\Kk$ and the auxiliary space $\mathcal{H}$ is called the dilation space.

In view of the previous theorem, the following two decomposition theorems can be stated as follows. For every pair of kernels $\Kk$ and $\Ll$ there is a maximal part of $\Kk$ which has a (closed) dilation with respect to $\Ll$. These are straightforward consequences of Theorem \ref{STD} and of Theorem \ref{LDT}.

\begin{theorem}
Let $\Kk,\Ll\in\kpx$ be kernels on $S$. Then there exists a short-type decomposition of $\Kk$ with respect to $\Ll$, i.e., the first summand is $\Ll$-absolutely continuous and the second one is $\Ll$-singular. Namely
\begin{align*}
\Kk=\Kk_{\mathrm{ac},\Ll}+\Kk_{\mathrm{s},\Ll},
\end{align*}
where
\begin{align*}
\sum\limits_{s,t\in S}\langle f(t),\Kk_{\mathrm{ac},\Ll}(s,t)f(s)\rangle=\inf\limits_{g\in\ker\wf_{\Ll}}\sum\limits_{s,t\in S}\langle f(t)-g(t),\Kk(s,t)(f(s)-g(s))\rangle.
\end{align*}
The decomposition is unique precisely when $\Kk_{\mathrm{ac},\Ll}$ is dominated by $\Ll$.
\end{theorem}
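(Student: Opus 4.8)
The plan is to transfer the form-level short-type decomposition (Theorem \ref{STD}) to the kernel level by means of the Lemma above that identifies the order structures of forms and of kernels. First I would apply Theorem \ref{STD} to the associated forms $\wf_\Kk$ and $\wf_\Ll$, obtaining
\begin{align*}
\wf_\Kk=(\wf_\Kk)_{{}_{\ker\wf_\Ll}}+\big(\wf_\Kk-(\wf_\Kk)_{{}_{\ker\wf_\Ll}}\big),
\end{align*}
where the short $(\wf_\Kk)_{{}_{\ker\wf_\Ll}}$ is $\wf_\Ll$-absolutely continuous and the remainder is $\wf_\Ll$-singular. Both summands are nonnegative forms dominated (in the sense of $\leq$) by $\wf_\Kk$: the short because $g=0$ is admissible in the defining infimum, and the remainder because the short is itself $\leq\wf_\Kk$. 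Hence the order-structure Lemma applies to each summand.

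Second, by that Lemma each summand is the associated form of a unique kernel $\prec\Kk$; these are the sought $\Kk_{\mathrm{ac},\Ll}$ and $\Kk_{\mathrm{s},\Ll}$. Since the assignment $\wf\mapsto\Ll$ of the Lemma is built pointwise through $\langle x,\Ll(s,t)y\rangle=\wf(h_{t,x},h_{s,y})$, it is additive, so $\Kk=\Kk_{\mathrm{ac},\Ll}+\Kk_{\mathrm{s},\Ll}$ holds pointwise as operators in $\mathbf{B}(\Ee,\Ees)$. Evaluating the quadratic form of $\Kk_{\mathrm{ac},\Ll}$ at $f\in\Xx$ and rewriting each $\wf_\Kk[f-g]$ as $\sum_{s,t\in S}\langle f(t)-g(t),\Kk(s,t)(f(s)-g(s))\rangle$ yields exactly the stated formula. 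The $\Ll$-absolute continuity of $\Kk_{\mathrm{ac},\Ll}$ and the $\Ll$-singularity of $\Kk_{\mathrm{s},\Ll}$ then hold by definition, as these kernel notions were introduced through the associated forms and the corresponding form-level properties are part of Theorem \ref{STD}.

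Finally, for the uniqueness clause I would invoke that the Lemma furnishes an order-preserving bijection between forms $\leq\wf_\Kk$ and kernels $\prec\Kk$; under this bijection a kernel decomposition of $\Kk$ into an $\Ll$-absolutely continuous and an $\Ll$-singular part corresponds precisely to a form decomposition of $\wf_\Kk$ with the same two properties. Thus uniqueness at the kernel level is equivalent to uniqueness at the form level, which by Theorem \ref{STD} holds exactly when the short $(\wf_\Kk)_{{}_{\ker\wf_\Ll}}$, equivalently $\Kk_{\mathrm{ac},\Ll}$, is dominated by $\wf_\Ll$, i.e. by $\Ll$. The only steps requiring genuine care are the additivity of the form-to-kernel correspondence (so that $\Kk_{\mathrm{s},\Ll}=\Kk-\Kk_{\mathrm{ac},\Ll}$ is again a kernel) and the faithful transfer of uniqueness through this correspondence; the remainder is routine bookkeeping once the bijection is in hand.
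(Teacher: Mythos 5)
Your proposal is correct and follows exactly the route the paper intends: the paper states this theorem as a ``straightforward consequence'' of Theorem \ref{STD} together with the order-structure lemma, which is precisely the transfer argument you carry out (apply the form-level short-type decomposition to $\wf_\Kk$, pull both summands back to kernels via the pointwise-defined, additive correspondence, and transport uniqueness through the resulting order bijection). Your write-up simply makes explicit the details the paper leaves to the reader, including the recovery of $\Kk$ itself under the form-to-kernel map.
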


\begin{theorem}
Let $\Kk,\Ll\in\kpx$ be kernels on $S$. Then the decomposition
\begin{align*}
\Kk=\mathbf{D}_\Ll\Kk+(\Kk-\mathbf{D}_\Ll\Kk),
\end{align*}
is a Lebesgue-type decomposition of $\Kk$ with respect to $\Ll$. That is, $\mathbf{D}_\Ll\Kk$ is strongly $\Ll$-absolutely continuous, $(\Kk-\mathbf{D}_\Ll\Kk)$ is $\Ll$-singular. The almost dominated part $\mathbf{D}_\Ll\Kk$ is defined by
\begin{align*}
\wf_{\mathbf{D}_\Ll\Kk}:=\mathbf{D}_{\wf_\Ll}\wf_\Kk,
\end{align*}
and hence 
\begin{align*}
\wf_{\mathbf{D}_\Ll\Kk}[f]=\inf\big\{\lim_{n\rightarrow +\infty}\wf_\Kk[f-g_n]~\big|~(g_n)_{n\in\mathbb{N}}\in\Xx^{\mathbb{N}}:(\wf_\Kk[g_n-g_m]\to 0)~\wedge~(\wf_\Ll[g_n]\to 0)\big\}
\end{align*}
and
\begin{align*}
\wf_{\mathbf{D}_\Ll\Kk}[f]=\inf\big\{\liminf_{n\rightarrow +\infty}\wf_\Kk[f-g_n]~\big|~(g_n)_{n\in\mathbb{N}}\in\Xx^{\mathbb{N}}:~\wf_\Ll[x_n]\to 0\big\}.
\end{align*} 
The decomposition is unique precisely when $\mathbf{D}_\Ll\Kk$ is dominated by $\Ll$.
\end{theorem}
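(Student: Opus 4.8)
The plan is to deduce everything from the form-level Lebesgue-type decomposition (Theorem \ref{LDT}) by transporting it through the order isomorphism between kernels and forms furnished by the lemma preceding Theorem \ref{dilat}. First I would apply Theorem \ref{LDT} to the pair of associated forms $\tf:=\wf_\Kk$ and $\wf:=\wf_\Ll$. This yields the form decomposition $\wf_\Kk=\mathbf{D}_{\wf_\Ll}\wf_\Kk+(\wf_\Kk-\mathbf{D}_{\wf_\Ll}\wf_\Kk)$, in which the first summand is almost dominated by (equivalently, by Theorem~1.1, strongly absolutely continuous with respect to) $\wf_\Ll$, and the second summand is $\wf_\Ll$-singular.

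The next step is to lift each summand back to a kernel. Since $\mathbf{D}_{\wf_\Ll}\wf_\Kk\leq\wf_\Kk$, the cited lemma provides a unique kernel $\prec\Kk$ whose associated form equals $\mathbf{D}_{\wf_\Ll}\wf_\Kk$; I would take this kernel as the \emph{definition} of $\mathbf{D}_\Ll\Kk$, so that $\wf_{\mathbf{D}_\Ll\Kk}=\mathbf{D}_{\wf_\Ll}\wf_\Kk$ holds by construction. Likewise, the remainder $\wf_\Kk-\mathbf{D}_{\wf_\Ll}\wf_\Kk$ is a nonnegative form dominated by $\wf_\Kk$, hence is the form of a unique kernel $\Mm\prec\Kk$. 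Evaluating on the generators $h_{t,x}$ and $h_{s,y}$ and using the additivity of the pairing $\langle\cdot,\cdot\rangle$ shows that $\Mm(s,t)=\Kk(s,t)-(\mathbf{D}_\Ll\Kk)(s,t)$ holds pointwise, so the kernel difference $\Kk-\mathbf{D}_\Ll\Kk$ is well defined, lies in $\kpx$, and has associated form exactly $\wf_\Kk-\wf_{\mathbf{D}_\Ll\Kk}$. This establishes the decomposition $\Kk=\mathbf{D}_\Ll\Kk+(\Kk-\mathbf{D}_\Ll\Kk)$ at the kernel level.

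The remaining assertions are then immediate. The kernel $\mathbf{D}_\Ll\Kk$ is strongly $\Ll$-absolutely continuous and $(\Kk-\mathbf{D}_\Ll\Kk)$ is $\Ll$-singular precisely because these notions for kernels were defined through the associated forms, and the corresponding statements hold at the form level by Theorem \ref{LDT}. The two variational formulae for $\wf_{\mathbf{D}_\Ll\Kk}[f]$ follow by substituting $\wf_{\mathbf{D}_\Ll\Kk}=\mathbf{D}_{\wf_\Ll}\wf_\Kk$ into the two formulae for the almost dominated part recorded after Theorem \ref{LDT}, with $f$ in place of $x$ and test sequences $(g_n)$ in place of $(x_n)$. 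For uniqueness, Theorem \ref{LDT} asserts that the form decomposition is unique exactly when $\mathbf{D}_{\wf_\Ll}\wf_\Kk$ is dominated by $\wf_\Ll$; since the lemma is an order isomorphism (a form is dominated by $\wf_\Ll$ iff the associated kernel is dominated by $\Ll$, and kernel decompositions into almost dominated plus singular parts correspond bijectively to such form decompositions), this criterion transfers verbatim to: the kernel decomposition is unique exactly when $\mathbf{D}_\Ll\Kk$ is dominated by $\Ll$.

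I do not expect a deep obstacle here, as the argument is essentially a translation. The one step that requires genuine care rather than bookkeeping is confirming that the correspondence $\Kk\mapsto\wf_\Kk$ respects the algebraic operation of subtraction, i.e. that the singular remainder form is carried by the \emph{pointwise} difference of kernels and that this difference is again positive definite. Once the generator computation $\langle x,\Mm(s,t)y\rangle=\langle x,(\Kk(s,t)-(\mathbf{D}_\Ll\Kk)(s,t))y\rangle$ above is in hand, every subsequent claim (strong absolute continuity, singularity, the formulae, uniqueness) is a direct consequence of the corresponding form-level statement, and no further analysis is needed.
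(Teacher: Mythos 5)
Your proposal is correct and takes essentially the same approach as the paper: the paper offers no separate argument for this theorem, declaring it a ``straightforward consequence'' of Theorem \ref{LDT} combined with the kernel--form correspondence lemma, which is precisely the translation you carry out. The details you supply --- lifting both summands through the lemma, the generator computation showing the remainder kernel is the pointwise difference and is again positive definite, and the bijective transfer of the uniqueness criterion --- are exactly the bookkeeping the paper leaves implicit.
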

Due to Theorem \ref{RNF} we have the following Radon--Nikodym-type result for kernels.
\begin{corollary}
Let $\Kk,\Ll\in\kpx$ be kernels on $S$ and assume that $\Kk$ is almost dominated by $\Ll$. Then for every $g\in\Xx$ there exists a sequence $(g_n)_{n\in\mathbb{N}}\in\Xx^{\mathbb{N}}$ such that
\begin{align*}
\forall f\in\Xx:\quad \sum\limits_{s,t\in S}\langle f(t),\Kk(s,t)g(s)\rangle=\lim\limits_{n\to +\infty}\sum\limits_{s,t\in S}\langle f(t),\Ll(s,t)g_n(s)\rangle.
\end{align*}
\end{corollary}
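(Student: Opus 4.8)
The plan is to deduce this corollary directly from Theorem \ref{RNF} by translating the hypothesis and conclusion about the kernels $\Kk$ and $\Ll$ into statements about their associated forms $\wf_\Kk$ and $\wf_\Ll$ on the space $\Xx$. By definition, $\Kk$ being almost dominated by $\Ll$ means precisely that $\wf_\Kk\ll_{\mathrm{ad}}\wf_\Ll$, so the hypotheses of Theorem \ref{RNF} are satisfied with $\tf:=\wf_\Kk$ and $\wf:=\wf_\Ll$. Thus the entire content of the corollary is essentially a change of notation, and the only genuine work is to check that the bilinear expressions appearing in the conclusion are exactly the forms $\wf_\Kk(f,g)$ and $\wf_\Ll(f,g_n)$.

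First I would fix an arbitrary $g\in\Xx$ and apply Theorem \ref{RNF} to $\tf=\wf_\Kk$ and $\wf=\wf_\Ll$ with the choice $y:=g$. This yields a sequence $(g_n)_{n\in\mathbb{N}}\in\Xx^{\mathbb{N}}$ (playing the role of the $(y_n)$ in the theorem) such that
\begin{align*}
\forall f\in\Xx:\qquad\wf_\Kk(f,g)=\lim\limits_{n\to +\infty}\wf_\Ll(f,g_n).
\end{align*}
The second step is purely definitional: by the defining formula for the form associated with a kernel, one has
\begin{align*}
\wf_\Kk(f,g)=\sum\limits_{s,t\in S}\langle f(t),\Kk(s,t)g(s)\rangle\qquad\mbox{and}\qquad\wf_\Ll(f,g_n)=\sum\limits_{s,t\in S}\langle f(t),\Ll(s,t)g_n(s)\rangle,
\end{align*}
and substituting these two identities into the limit relation above gives exactly the asserted conclusion.

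There is essentially no obstacle here; the corollary is a transcription of Theorem \ref{RNF} through the dictionary established by the definition $\wf_\Kk(f,g)=\sum_{s,t}\langle f(t),\Kk(s,t)g(s)\rangle$ and the preceding Lemma, which guarantees that the order-theoretic and continuity notions for kernels coincide with those for the associated forms. The only point one should verify for cleanliness is that the sums involved are genuinely finite, but this is immediate because every $f,g,g_n\in\Xx$ has finite support, so each sum over $S\times S$ reduces to a finite sum. Hence the proof amounts to invoking Theorem \ref{RNF} and unwinding the definitions, and I would present it in two or three short lines.
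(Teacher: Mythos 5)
Your proposal is correct and follows exactly the paper's intended argument: the paper states this corollary as an immediate consequence of Theorem \ref{RNF}, relying on the fact that almost domination of kernels is \emph{defined} via the associated forms, so that applying Theorem \ref{RNF} to $\wf_\Kk$ and $\wf_\Ll$ and unwinding the definition $\wf_\Kk(f,g)=\sum_{s,t\in S}\langle f(t),\Kk(s,t)g(s)\rangle$ gives the conclusion. Your additional remark that the sums are finite because elements of $\Xx$ have finite support is a harmless, correct verification that the paper leaves implicit.
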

The following statements are immediate consequences of Theorem \ref{abszfolytabszfolyt} and Theorem \ref{forminf}.
\begin{corollary}
Let $\Kk,\Ll\in\kpx$ be kernels on $S$, then $\mathbf{D}_{\Ll}\Kk$ is $\mathbf{D}_{\Kk}\Ll$-almost dominated. And by symmetry, $\mathbf{D}_{\Kk}\Ll$ is $\mathbf{D}_{\Ll}\Kk$-almost dominated.
\end{corollary}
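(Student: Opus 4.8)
The plan is to reduce the claim entirely to the form-level result of Theorem \ref{abszfolytabszfolyt}, exploiting the fact that almost domination of kernels is by definition phrased through the associated forms. First I would recall that, by the Lebesgue-type decomposition theorem for kernels stated just above, the almost dominated part $\mathbf{D}_\Ll\Kk$ is the unique kernel whose associated form is $\wf_{\mathbf{D}_\Ll\Kk}=\mathbf{D}_{\wf_\Ll}\wf_\Kk$, and symmetrically $\wf_{\mathbf{D}_\Kk\Ll}=\mathbf{D}_{\wf_\Kk}\wf_\Ll$. These are genuine kernels because $\mathbf{D}_{\wf_\Ll}\wf_\Kk\leq\wf_\Kk$ (the almost dominated part lies below $\wf_\Kk$ by Theorem \ref{LDT}), and the order-isomorphism Lemma assigns a unique kernel $\prec\Kk$ to every form that is $\leq\wf_\Kk$.

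Next, by the convention that $\Kk$ is $\Ll$-almost dominated precisely when $\wf_\Kk$ is $\wf_\Ll$-almost dominated, the assertion ``$\mathbf{D}_\Ll\Kk$ is $\mathbf{D}_\Kk\Ll$-almost dominated'' unwinds to the statement that $\wf_{\mathbf{D}_\Ll\Kk}$ is $\wf_{\mathbf{D}_\Kk\Ll}$-almost dominated, that is,
\begin{align*}
\mathbf{D}_{\wf_\Ll}\wf_\Kk\ll_{\mathrm{ad}}\mathbf{D}_{\wf_\Kk}\wf_\Ll.
\end{align*}
I would then apply Theorem \ref{abszfolytabszfolyt} with the substitution $\tf:=\wf_\Kk$ and $\wf:=\wf_\Ll$; under this substitution $\dwt=\mathbf{D}_{\wf_\Ll}\wf_\Kk$ and $\dtw=\mathbf{D}_{\wf_\Kk}\wf_\Ll$, so the first conclusion $\dwt\ll_{\mathrm{ad}}\dtw$ is exactly the displayed inequality. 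The second, symmetric claim follows by interchanging the roles of $\Kk$ and $\Ll$, equivalently from the second conclusion $\dtw\ll_{\mathrm{ad}}\dwt$ of the same theorem.

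Since every step is a direct translation between the two formalisms, I do not anticipate a genuine obstacle; the only point requiring care is the bookkeeping of the correspondence, namely verifying that $\mathbf{D}_\Ll\Kk$ and $\mathbf{D}_\Kk\Ll$ really are the kernels whose associated forms are $\mathbf{D}_{\wf_\Ll}\wf_\Kk$ and $\mathbf{D}_{\wf_\Kk}\wf_\Ll$, so that the form-level conclusion transfers back faithfully to the kernel level. This is guaranteed by the order-isomorphism Lemma together with the definitions preceding the statement.
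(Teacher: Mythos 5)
Your proposal is correct and takes essentially the same route as the paper, which presents this corollary as an immediate consequence of Theorem \ref{abszfolytabszfolyt} via the kernel--form correspondence: you substitute $\tf:=\wf_\Kk$, $\wf:=\wf_\Ll$ and unwind the definitions, exactly as intended. The extra bookkeeping you include (that $\mathbf{D}_\Ll\Kk$ and $\mathbf{D}_\Kk\Ll$ are genuine kernels, guaranteed by the order-isomorphism lemma since $\mathbf{D}_{\wf_\Ll}\wf_\Kk\leq\wf_\Kk$) just makes explicit what the paper leaves implicit.
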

\begin{corollary} Let $\Kk$ and $\Ll$ be kernels on $S$. Then the infimum $\Kk\wedge\Ll$ of $\Kk$ and $\Ll$ exists precisely when $\mathbf{D}_\Kk\Ll$ and $\mathbf{D}_\Ll\Kk$ are comparable. 
\end{corollary}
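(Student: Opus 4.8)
The plan is to reduce the assertion to the infimum theorem for forms (Theorem \ref{forminf}) by transporting the whole problem along the order embedding $\Kk\mapsto\wf_\Kk$. The single fact that makes this transport faithful is the key Lemma of this section: a form $\wf$ satisfies $\wf\leq\wf_\Kk$ \emph{if and only if} $\wf=\wf_\Jj$ for a unique kernel $\Jj\prec\Kk$. In particular every lower bound of $\wf_\Kk$ in $\fpx$ already comes from a kernel, so the lower bounds of $\{\wf_\Kk,\wf_\Ll\}$ in $\fpx$ are exactly the forms associated with the common lower bounds of $\{\Kk,\Ll\}$ in $(\kpx,\prec)$.

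First I would show that the infimum $\Kk\wedge\Ll$ exists in $(\kpx,\prec)$ precisely when the infimum $\wf_\Kk\wedge\wf_\Ll$ exists in $\fpx$, and that the two correspond under the embedding. If $\wf_\Kk\wedge\wf_\Ll$ exists, then since it is $\leq\wf_\Kk$ the key Lemma produces a unique kernel $\Mm\prec\Kk$ with $\wf_\Mm=\wf_\Kk\wedge\wf_\Ll$; one checks $\Mm\prec\Ll$ as well, and for any common lower bound $\Jj$ of $\Kk$ and $\Ll$ the form $\wf_\Jj$ is a lower bound of the two forms, hence $\wf_\Jj\leq\wf_\Mm$, i.e. $\Jj\prec\Mm$. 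Thus $\Mm=\Kk\wedge\Ll$. Conversely, if $\Kk\wedge\Ll$ exists then, given any form $\uf$ with $\uf\leq\wf_\Kk$ and $\uf\leq\wf_\Ll$, the key Lemma writes $\uf=\wf_\Jj$ with $\Jj\prec\Kk$ and $\Jj\prec\Ll$, whence $\Jj\prec\Kk\wedge\Ll$ and $\uf\leq\wf_{\Kk\wedge\Ll}$; so $\wf_{\Kk\wedge\Ll}=\wf_\Kk\wedge\wf_\Ll$.

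With this equivalence in hand, Theorem \ref{forminf} applied to $\tf:=\wf_\Kk$ and $\wf:=\wf_\Ll$ says that $\wf_\Kk\wedge\wf_\Ll$ exists iff the almost dominated parts $\mathbf{D}_{\wf_\Kk}\wf_\Ll$ and $\mathbf{D}_{\wf_\Ll}\wf_\Kk$ are comparable. Finally I would invoke the identities $\wf_{\mathbf{D}_\Kk\Ll}=\mathbf{D}_{\wf_\Kk}\wf_\Ll$ and $\wf_{\mathbf{D}_\Ll\Kk}=\mathbf{D}_{\wf_\Ll}\wf_\Kk$ from the Lebesgue-type decomposition theorem for kernels, together with the fact that $\prec$ is defined by $\leq$ on the associated forms, to rewrite this comparability as the comparability of the kernels $\mathbf{D}_\Kk\Ll$ and $\mathbf{D}_\Ll\Kk$. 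Chaining the equivalences yields the claim.

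The only genuinely non-formal step is the first one, and its whole content is the observation that lower bounds are preserved \emph{in both directions} by the embedding. The direction from kernels to forms is immediate from the definition of $\prec$; the direction from forms to kernels -- that a form dominated by $\wf_\Kk$ is necessarily of the shape $\wf_\Jj$ -- is exactly where the key Lemma is indispensable, and it is what prevents the form infimum from existing while failing to be representable by a kernel. Everything else is a transcription of Theorem \ref{forminf} across the order isomorphism.
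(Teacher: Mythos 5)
Your proposal is correct and follows exactly the route the paper intends: the paper states this corollary as an immediate consequence of Theorem \ref{forminf} transported through the key Lemma (that every form below $\wf_\Kk$ is $\wf_\Jj$ for a unique kernel $\Jj\prec\Kk$), and your argument simply fills in the details of that transport, including the definition $\wf_{\mathbf{D}_\Ll\Kk}=\mathbf{D}_{\wf_\Ll}\wf_\Kk$ from the kernel Lebesgue-type decomposition. Your two-directional verification that lower bounds correspond under the embedding is precisely the content the paper leaves implicit.
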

Finally, we have the following characterizations according to Theorem \ref{disjoint part} and Theorem \ref{ex(t,t+w) substeq ex(0,t+w)}.
\begin{corollary}
Let $\Jj,\Kk\in\kpx$ be kernels on $S$, such that $\Jj\prec\Kk$. The following statements are equivalent.
\begin{itemize}
\item[$(i)$] $\Jj$ and $\Kk-\Jj$ are singular.
\item[$(ii)$] $\mathbf{D}_{\Jj}\Kk=\Jj$.
\item[$(iii)$] $\Jj$ is an extreme point of the convex set $[\mathsf{0},\Kk]=\big\{\mathsf{U}\in\kpx~\big|~\mathsf{0}\prec\mathsf{U}\prec\Kk\big\}$.
\end{itemize}
\end{corollary}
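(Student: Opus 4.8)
The plan is to recognise this corollary as the kernel analogue of Theorem~\ref{disjoint part} and to transfer its three equivalent conditions to the level of forms through the order correspondence $\Ll\mapsto\wf_\Ll$ supplied by the key Lemma of this section. Concretely, I would set $\uf:=\wf_{\Jj}$ and $\tf:=\wf_{\Kk}$. Since $\Jj\prec\Kk$ means exactly $\wf_{\Jj}\leq\wf_{\Kk}$, the pair $(\uf,\tf)$ satisfies the hypothesis of Theorem~\ref{disjoint part}, and the whole task reduces to verifying that conditions $(i)$--$(iii)$ for the kernels $\Jj,\Kk$ are, one by one, the translations of conditions $(i)$--$(iii)$ for the forms $\wf_{\Jj},\wf_{\Kk}$.

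First I would record the two algebraic facts that make this dictionary work. The map $\Ll\mapsto\wf_{\Ll}$ is \emph{additive and positively homogeneous} in $\Ll$, immediately from the defining sum $\wf_{\Ll}(f,g)=\sum_{s,t\in S}\langle f(t),\Ll(s,t)g(s)\rangle$; in particular $\wf_{\Kk-\Jj}=\wf_{\Kk}-\wf_{\Jj}$. It is also \emph{injective}, since by the construction in the key Lemma each kernel value is recovered from its form via $\langle x,\Ll(s,t)y\rangle=\wf_{\Ll}(h_{t,x},h_{s,y})$. With these in hand, condition $(i)$ translates directly: by definition, singularity of the kernels $\Jj$ and $\Kk-\Jj$ is singularity of the forms $\wf_{\Jj}$ and $\wf_{\Kk-\Jj}=\wf_{\Kk}-\wf_{\Jj}$, which is condition $(i)$ of Theorem~\ref{disjoint part}. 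Condition $(ii)$ uses the very definition $\wf_{\mathbf{D}_{\Jj}\Kk}:=\mathbf{D}_{\wf_{\Jj}}\wf_{\Kk}$ of the almost dominated part of a kernel: by injectivity of $\Ll\mapsto\wf_{\Ll}$, the equality $\mathbf{D}_{\Jj}\Kk=\Jj$ holds if and only if $\mathbf{D}_{\wf_{\Jj}}\wf_{\Kk}=\wf_{\Jj}$, which is condition $(ii)$ of Theorem~\ref{disjoint part}.

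For condition $(iii)$ I would upgrade the key Lemma to the statement that $\Ll\mapsto\wf_{\Ll}$ is an \emph{affine order isomorphism} of the kernel segment $[\mathsf{0},\Kk]$ onto the form segment $[0,\wf_{\Kk}]$. The Lemma already gives a bijection between $\{\Ll\in\kpx\mid\Ll\prec\Kk\}$ and $\{\wf\in\fpx\mid\wf\leq\wf_{\Kk}\}$ that both preserves and reflects the order, by the definition of $\prec$; since $\mathsf{0}\prec\mathsf{U}$ and $0\leq\wf$ hold automatically, these two sets are exactly the segments $[\mathsf{0},\Kk]$ and $[0,\wf_{\Kk}]$, and affinity is the additivity and homogeneity noted above. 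An affine order bijection between convex sets carries extreme points to extreme points, so $\Jj\in\mathrm{ex}[\mathsf{0},\Kk]$ if and only if $\wf_{\Jj}\in\mathrm{ex}[0,\wf_{\Kk}]$, which is condition $(iii)$ of Theorem~\ref{disjoint part}. Combining the three translations with Theorem~\ref{disjoint part} then yields the asserted equivalences.

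The main obstacle, and really the only point requiring care, is establishing this affine order isomorphism cleanly: verifying injectivity of $\Ll\mapsto\wf_{\Ll}$ and its compatibility with both the convex structure and subtraction, so that the notions of \emph{singular complement}, \emph{$\mathbf{D}$-part}, and \emph{extreme point} each survive the passage between kernels and forms without loss. Once this dictionary is in place, the corollary is nothing more than Theorem~\ref{disjoint part} read through it.
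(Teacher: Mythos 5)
Your proposal is correct and takes essentially the same route as the paper, which presents this corollary as an immediate consequence of the key Lemma (the order correspondence $\Ll\mapsto\wf_\Ll$ between kernels and forms) combined with Theorem~\ref{disjoint part}. You merely spell out the additivity, injectivity, and affine-order-isomorphism details that the paper leaves implicit, which is exactly the intended argument.
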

In view of Theorem \ref{dilat} the previous corollary says that the extremal points of the convex set $[\mathsf{0},\Kk]$ are precisely those kernels that have closed dilation.
\begin{corollary}
 Let $\Kk,\Ll\in\kpx$ be kernels on $S$. Then the following statements are equivalent
\begin{itemize}
\item[$(i)$] $\Kk$ is an extreme point of $[\mathsf{0},\Kk+\Ll]$.
\item[$(ii)$] $\mathrm{ex}[\Kk,\Kk+\Ll]\subseteq\mathrm{ex}[\mathsf{0},\Kk+\Ll]$.
\end{itemize}
Replacing $\Ll$ with $\Ll-\Kk$ (if $\Kk\prec\Ll$) we have
\begin{align*}
\Kk\in\mathrm{ex}[\mathsf{0},\Ll]\hspace{0.3cm}\Leftrightarrow\hspace{0.3cm}\mathrm{ex}[\Kk,\Ll]\subseteq\mathrm{ex}[\mathsf{0},\Ll].
\end{align*}
\end{corollary}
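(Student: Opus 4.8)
The plan is to transport the statement from the level of kernels down to the level of forms, where it is precisely Theorem~\ref{ex(t,t+w) substeq ex(0,t+w)} applied to $\tf:=\wf_\Kk$ and $\wf:=\wf_\Ll$. The bridge is the assignment $\Phi:\Mm\mapsto\wf_\Mm$, which by the order-isomorphism Lemma restricts to a bijection between the kernel segment $[\mathsf{0},\Kk+\Ll]$ and the form segment $[0,\wf_{\Kk+\Ll}]$. Since $\Mm\mapsto\wf_\Mm$ is additive, one has $\wf_{\Kk+\Ll}=\wf_\Kk+\wf_\Ll$, so the target segment is exactly $[0,\wf_\Kk+\wf_\Ll]$, the object appearing in Theorem~\ref{ex(t,t+w) substeq ex(0,t+w)}.

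First I would record the structural facts. The assignment $\Phi$ is linear (hence affine) in $\Mm$, and by the Lemma it is a bijection from $\set{\Mm\in\kpx}{\Mm\prec\Kk+\Ll}$ onto $\set{\ssf\in\fpx}{\ssf\leq\wf_{\Kk+\Ll}}$ that preserves and reflects the ordering, since $\Mm\prec\Mm'$ means exactly $\wf_\Mm\leq\wf_{\Mm'}$ by the definition of $\prec$. In particular $\Phi$ maps the sub-segment $[\Kk,\Kk+\Ll]$ bijectively onto $[\wf_\Kk,\wf_{\Kk+\Ll}]$ and the segment $[\mathsf{0},\Kk+\Ll]$ onto $[0,\wf_{\Kk+\Ll}]$. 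The key observation is that an affine bijection between convex sets carries extreme points to extreme points in both directions; consequently $\Phi\big(\mathrm{ex}[\Kk,\Kk+\Ll]\big)=\mathrm{ex}[\wf_\Kk,\wf_{\Kk+\Ll}]$ and $\Phi\big(\mathrm{ex}[\mathsf{0},\Kk+\Ll]\big)=\mathrm{ex}[0,\wf_{\Kk+\Ll}]$.

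With these identifications the equivalence is immediate. Because $\Phi$ is injective, the inclusion in statement $(ii)$ holds for the kernels if and only if $\mathrm{ex}[\wf_\Kk,\wf_\Kk+\wf_\Ll]\subseteq\mathrm{ex}[0,\wf_\Kk+\wf_\Ll]$ holds for the forms; likewise $\Kk\in\mathrm{ex}[\mathsf{0},\Kk+\Ll]$ if and only if $\wf_\Kk\in\mathrm{ex}[0,\wf_\Kk+\wf_\Ll]$, which is statement $(i)$ at the form level. Applying Theorem~\ref{ex(t,t+w) substeq ex(0,t+w)} with $\tf=\wf_\Kk$ and $\wf=\wf_\Ll$ then yields the equivalence of $(i)$ and $(ii)$. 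The displayed second assertion follows by the same transport, now invoking the substituted statement of Theorem~\ref{ex(t,t+w) substeq ex(0,t+w)}, after replacing $\Ll$ by $\Ll-\Kk$ (legitimate when $\Kk\prec\Ll$) and using $\wf_{\Ll-\Kk}=\wf_\Ll-\wf_\Kk$.

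The argument is essentially bookkeeping once the Lemma is in hand. The only point requiring a moment's care is that extreme points are genuinely preserved under $\Phi$ and that this preservation is compatible with passing to the sub-segment $[\Kk,\Kk+\Ll]$; but since $\Phi$ is an \emph{affine} bijection rather than merely order-preserving, this is automatic. I therefore expect no real obstacle here, as the entire content has already been isolated in the order-isomorphism Lemma and in the form-level Theorem~\ref{ex(t,t+w) substeq ex(0,t+w)}.
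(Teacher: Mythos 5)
Your proposal is correct and takes essentially the same approach as the paper: the paper offers no separate proof, stating that this corollary is an immediate consequence of the kernel--form order-isomorphism lemma together with the form-level Theorem~\ref{ex(t,t+w) substeq ex(0,t+w)}, which is exactly the transport argument you describe. You simply make explicit the bookkeeping (additivity of $\Mm\mapsto\wf_\Mm$, bijectivity between the kernel and form segments, and preservation of extreme points under affine bijections) that the paper leaves implicit.
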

\newpage
\begin{center}
\textbf{Acknowledgement}
\end{center}
I am greatly indebted to Professor Zolt\'an Sebesty\'en for his valuable guidance, encouragement, and support throughout my study.


\begin{thebibliography}{1}



\bibitem{anderson}
    \newblock{Anderson, Jr., W. N. and Trapp, G. E.},
    \newblock{\em Shorted operators II.},
    \newblock{SIAM J. Appl. Math.}, {28} {(1975)}, {60--71.}

\bibitem{ando1976}
\newblock{Ando,~T.},
\newblock {\em Lebesgue-type decomposition of positive operators},
\newblock {Actha Sci. Math. (Szeged)} {38} {(1976)}, {253--260}.

\bibitem{ando1999}
\newblock{Ando,~T.},
\newblock{\em Problem of infimum in the positive cone},
 \newblock{Analytic and geometric inequalities and applications},
 \newblock{Math. Appl., {478}, 1--12, Kluwer Acad. Publ., {Dordrecht}, {1999}}.

\bibitem{as}
\newblock{Ando,~T. and Szyma\'nski,~W.},
\newblock{\em Order Structure and Lebesgue Decomposition of Positive Definite Operator Functions},
\newblock{Indiana
Univ. Math. J.}, \newblock{35 (1986)}, \newblock{157–-173.}

\bibitem{gudder}
\newblock{Moreland,~T. and Gudder,~S.},
\newblock {\em Infima of Hilbert space effects},
\newblock {Linear Algebra and its Applications} {286} {(1999)}, {1--17}.

\bibitem{krein}
\newblock{Krein,~M. G.},
\newblock{\em The theory of self-adjoint extensions of semi-bounded Hermitian operators},
\newblock{[Mat. Sbornik] 10 (1947), 431--495}.

\bibitem{lebdec}
\newblock{Hassi,~S. and Sebesty{\'e}n,~Z. and de Snoo,~H.},
\newblock{\em Lebesgue type decompositions for nonnegative forms},
\newblock{{J. Funct. Anal.} {257}{(12)} {(2009)}, {3858--3894.}}

\bibitem{kadison}
\newblock{Kadison,~R.},
\newblock {\em Order properties of bounded self-adjoint operators},
\newblock {Proc. Amer. Math. Soc.} {34} {(1951)}, {505--510}.

\bibitem{kato}
\newblock{Kato,~T.},
\newblock{\em Perturbation Theory for Linear Operators}, \newblock{Springer-Verlag (Berlin, 1980).}

\bibitem{pekarev ext}
\newblock{Pekarev, E. L.},
\newblock{\em Shorts of operators and some extremal problems},
\newblock{Acta Sci. Math. (Szeged)}, {56} {(1992)}, {147--163.}



\bibitem{stt1}
\newblock{Sebesty\'en,~Z. and Tarcsay,~Zs. and Titkos,~T.},
\newblock{\em Lebesgue decomposition theorems},
\newblock{Acta Sci. Math. (Szeged)}, {79(1-2)} {(2013)}, {219--233.}

\bibitem{stt2}
\newblock{Sebesty\'en,~Z. and Tarcsay,~Zs. and Titkos,~T.},
\newblock{\em A Short-type Decomposition Of Forms},
\newblock{Manuscript (http://arxiv.org/pdf/1406.6635.pdf)}.

\bibitem{cof}
\newblock{Sebesty\'en,~Z. and Titkos,~T.},
\newblock{\em Complement of forms},
\newblock{Positivity}, {17} {(2013)}, {1--15.}


\bibitem{strn}
\newblock{Sebesty\'en,~Z. and Titkos,~T.},
\newblock{\em A Radon--Nikodym type theorem for forms},
\newblock{Positivity}, {17} {(2013)}, {863--873.}


\bibitem{psonf}
\newblock{Sebesty\'en,~Z. and Titkos,~T.},
\newblock{\em Parallel subtraction of nonnegative forms},
\newblock{Acta Math. Hung.}, \newblock{136(4) (2012)}, \newblock{252--269.}

\bibitem{simon}
\newblock{Simon,~B.},
\newblock {\em A canonical decomposition for quadratic forms with applications to monotone convergence theorems,}
\newblock {J. Funct. Anal.} \newblock{ 28 (1978), 377--385.}


\bibitem{s1}
\newblock{Szyma\'nski,~W.},
\newblock{\em Positive forms and dilations}, \newblock{Trans Amer Math.} \newblock{301(2)} \newblock{(1987)}, \newblock{761--780.}


\bibitem{sz}
\newblock{Szafraniec,~F.H.},
\newblock{\em Boundedness of the shift operator related to positive definite forms: An application to moment problems},
\newblock{Ark. Mat.} \newblock{19 (1981), no. 2}, \newblock{251--259}.

\bibitem{trn}
\newblock{Tarcsay,~Zs.},
\newblock{\em Radon--Nikodym theorems for nonnegative forms, measures and representable functionals},
\newblock{Manuscript (http://arxiv.org/pdf/1403.5891v2.pdf).}

\bibitem{tarcsay}
\newblock{Tarcsay, Zs.},
\newblock{\em Lebesgue-type decomposition of positive operators},
\newblock{Positivity}, {17(3)} {(2013)}, {803--817.}

\bibitem{tarcsay-funk}
\newblock{Tarcsay, Zs.},
\newblock{\em Lebesgue decomposition for representable functionals on ${}^{\ast}$-algebras},
\newblock{Glasgow Mathematical Journal
}, {to appear.}


\bibitem{titkosinf}
\newblock{Titkos, T.},
\newblock{\em Ando's theorem for nonnegative forms},
\newblock{Positivity 16(2012)}, \newblock{619--626}.

\bibitem{ldocvnf}
\newblock{Titkos, T.},
\newblock{\em Lebesgue decomposition of contents via nonnegative forms},
\newblock{Acta Math. Hung.}, \newblock{140(1-2) (2013)}, \newblock{151--161.}

\end{thebibliography}
\end{document}